\declaretheorem[parent=]{theorem}
\declaretheorem[parent=]{lemma}
\declaretheorem[sibling=theorem]{corollary}
\DeclarePairedDelimiter\ceil{\lceil}{\rceil}
\DeclarePairedDelimiter\floor{\lfloor}{\rfloor}
\DeclareMathOperator*{\argmin}{arg\,min}
\newcommand{\ifrac}[2]{{#1}/{#2}}
\newcommand{\reals}{{\mathbb{R}}}
\newcommand{\naturals}{{\mathbb{N}}}
\newcommand{\E}{\mathbb{E}}
\newcommand{\eqdef}{\triangleq}
\newcommand{\diam}{D}
\newcommand{\lip}{G}
\newcommand{\sm}{\beta}
\newcommand{\fbound}{F}
\newcommand{\alg}{\mathcal{A}}
\newcommand{\cF}{\mathcal{F}}
\newcommand{\domain}{\mathcal{W}}
\newcommand{\delayavg}{\tau_{\text{avg}}}
\newcommand{\delaymed}{\tau_{\text{med}}}
\newcommand{\delaymax}{\tau_{\text{max}}}
\newcommand{\delayq}[1]{\tau_{{#1}}}
\newcommand{\delayqq}{\tau_q}
\newcommand{\delaybound}{\bar{\tau}}
\newcommand{\delayqqbound}{\delaybound_q}
\newcommand{\batch}{B}
\newcommand{\machines}{M}
\newcommand{\oracle}{g}
\newcommand{\wt}{\tilde{w}}
\newcommand{\wout}{\widehat{w}}
\newcommand{\ind}[1]{\mathbf{1}\brk[s]*{#1}}
\title{Faster Stochastic Optimization with Arbitrary Delays \\ via Asynchronous Mini-Batching}
\author{%
    Amit Attia\footnotemark[1]%
    \and
    Ofir Gaash%
    \thanks{\scriptsize Blavatnik School of Computer Science, Tel Aviv University; \texttt{\{amitattia,ofirgaash\}@mail.tau.ac.il}.}
    \and
    Tomer Koren%
    \thanks{\scriptsize Blavatnik School of Computer Science, Tel Aviv University, and Google Research Tel Aviv; \texttt{tkoren@tauex.tau.ac.il}.}
}
\date{}
\begin{document}
\maketitle

\begin{abstract}
    We consider the problem of asynchronous stochastic optimization, where
    an optimization algorithm makes updates based on stale stochastic gradients of the objective that are subject to an arbitrary (possibly adversarial) sequence of delays.
    We present a procedure which, for any given $q \in (0,1]$, transforms any standard stochastic first-order method to an asynchronous method with convergence guarantee depending on the $q$-quantile delay of the sequence.
    This approach leads to convergence rates of the form $O(\delayqq/qT+\sigma/\sqrt{qT})$ for non-convex and $O(\delayqq^2/(q T)^2+\sigma/\sqrt{qT})$ for convex smooth problems, where $\delayqq$ is the $q$-quantile delay, generalizing and improving on existing results that depend on the average delay.
    We further show a method that 
    automatically adapts to all quantiles simultaneously, without any prior knowledge of the delays, achieving convergence rates of the form $O(\inf_{q} \delayqq/qT+\sigma/\sqrt{qT})$ for non-convex and $O(\inf_{q} \delayqq^2/(q T)^2+\sigma/\sqrt{qT})$ for convex smooth problems.
    Our technique is based on asynchronous mini-batching with a careful batch-size selection and filtering of stale gradients.
\end{abstract}

\section{Introduction}

Stochastic first-order optimization methods play a pivotal role in modern machine learning.
Given their sequential nature, large-scale applications employ distributed optimization techniques to leverage multiple cores or machines.
Mini-batching \citep{cotter2011better,dekel2012optimal,duchi2012randomized}, perhaps the most common approach, involves computing several stochastic gradients of a single model across a number of distributed workers, sending them to a server for averaging, followed by an update step to the model.
A primary drawback of this method is the synchronization performed by the server, which confines the iteration time to be aligned with that of the slowest machine.
Hence, variation in arrival time of stochastic gradients
due to different factors such as hardware imbalances and varying communication loads, can dramatically degrade optimization performance.

An effective approach to mitigate degradation caused by delayed gradient computation is asynchronous stochastic optimization \citep{nedic2001distributed,agarwal2011distributed,chaturapruek2015asynchronous,%
lian2015asynchronous,%
feyzmahdavian2016asynchronous}, where stochastic gradients from each worker are sent to a server, which applies them immediately and without synchronization. Thus, an update is executed as soon as a stochastic gradient is received by the server, independent of other pending gradients. 
A significant challenge in these methods is the use of stale gradients, i.e., gradients that were computed in earlier steps 
and thus suffer from substantial delays, potentially rendering them outdated.
Numerous studies have investigated asynchronous optimization under various delay models, including recent results under constant delay \citep{arjevani2020tight,stich2020error}, constant compute time per machine \citep{tyurin2024optimal}, and the more general arbitrary delay model \citep{aviv2021asynchronous,cohen2021asynchronous,mishchenko2022asynchronous,koloskova2022sharper,feyzmahdavian2023asynchronous}, where the sequence of delays is entirely arbitrary, and possibly generated by an adversary. The latter challenging setting is the focus of our work.

Early work on asynchronous stochastic optimization considered constant or bounded delay and showed that the maximal delay only affects a lower-order term in the convergence rates, first for quadratic objectives~\citep{arjevani2020tight} and subsequently for general smooth functions~\citep{stich2020error}.
More recent studies in the arbitrary delay model established that a tighter dependence can be obtained, moving from dependency on the maximal delay to bounds  depending on the average delay~\citep{cohen2021asynchronous,aviv2021asynchronous,feyzmahdavian2023asynchronous} or the number of distributed workers~\citep{mishchenko2022asynchronous,koloskova2022sharper}.

However, existing results for the arbitrary delay model suffer from several shortcomings. 
First and foremost, while the dependence on the average delay is tight in some scenarios (e.g., with constant, or nearly constant delays), it remains unclear whether better guarantees can be achieved in situations with large variations in delays. Indeed, the average delay is known not to be robust to outliers, which are common in vastly distributed settings, and ideally one would desire to rely on a more robust statistic of the delay sequence, such as the median. 
Second, no accelerated rates have been established for convex smooth optimization, and many of the existing convergence results, notably in non-convex smooth optimization, require a known bound on the average delay or on the number of workers. 
Lastly, the design and analysis of asynchronous stochastic optimization methods are rather ad-hoc, necessitating the analysis of methods from scratch for each and every optimization scenario.

\subsection{Summary of Contributions}
\label{sec:contributions}

In this work we address the shortcomings mentioned above.  Our main contributions are summarized as follows:

\begin{table*}
    \caption{Convergence results for $T$ rounds of asynchronous stochastic optimization with arbitrary delays (other delay models are omitted from the table); 
    $\delayavg$ is the average delay and $\delayqq$ is the $q$-quantile delay.  Note that quantile-delay bounds are superior to average-delay bounds, as the median delay ($q=1/2$) is always bounded by twice the average delay, and may be significantly smaller; see \cref{sec:avg-optimality} for details.
    Only the results of \cref{alg:async-mini-batch-sweep} and \citet{aviv2021asynchronous} depend on the actual delay parameters and not on known upper bounds. 
    The results of \citet{koloskova2022sharper,mishchenko2022asynchronous} depend in fact on the number of machines in a master-worker setup, which upper bounds the average delay (see \cref{sec:avg-machines-bound}).
    }
    \label{tab:comparison}
    \centering
    \begin{threeparttable}
    \begin{tabular}{c|ccc|>{\columncolor[gray]{0.95}}c}
    \toprule
        \sc setting & \sc \begin{tabular}[c]{@{}c@{}} prior \\ state-of-the-art \end{tabular} & \sc \begin{tabular}[c]{@{}c@{}} \sc algorithm \ref{alg:async-mini-batch} \\ (Cor. \ref{cor:median}) \end{tabular} & \sc \begin{tabular}[c]{@{}c@{}} \sc algorithm \ref{alg:async-mini-batch-sweep} \\ (Thms. \ref{thm:quantile-collection},\ref{thm:convex-quantile}) \end{tabular} & \sc \begin{tabular}[c]{@{}c@{}} \sc centralized \\ optimization \end{tabular} \\
        \midrule
        non-convex, smooth & $\frac{1+\delayavg}{T}+\frac{\sigma}{\sqrt{T}}\tnote{[a]}$ & $\frac{1+\delayqq}{q T}+\frac{\sigma}{\sqrt{q T}}$ & $\inf_q \frac{1+\delayqq}{q T}+\frac{\sigma}{\sqrt{q T}}$ & $\frac{1}{T}+\frac{\sigma}{\sqrt{T}}$\tnote{[d]}
        \\
        convex, smooth & $\frac{1+\delayavg}{T}+\frac{\sigma}{\sqrt{T}}\tnote{[b]}$ & $\frac{1+\delayqq^2}{(q T)^2}+\frac{\sigma}{\sqrt{q T}}$ & $\inf_q \frac{1+\delayqq^2}{(q T)^2}+\frac{\sigma}{\sqrt{q T}}$ & $\frac{1}{T^2}+\frac{\sigma}{\sqrt{T}}$\tnote{[e]}
        \\
        convex, non-smooth & $\frac{\sqrt{1+\delayavg}(1+\sigma)}{\sqrt{T}}\tnote{[c]}$ & $\frac{\sqrt{1+\delayqq}+\sigma}{\sqrt{q T}}$ & $\inf_q \frac{\sqrt{1+\delayqq}+\sigma}{\sqrt{T}}$ & $\frac{1+\sigma}{\sqrt{T}}$\tnote{[e]}
        \\
        \bottomrule
    \end{tabular}
    \begin{tablenotes}\footnotesize
    \item[] \textsuperscript{a}\cite{cohen2021asynchronous,mishchenko2022asynchronous,koloskova2022sharper}, \textsuperscript{b}\cite{aviv2021asynchronous,cohen2021asynchronous,feyzmahdavian2023asynchronous}, \textsuperscript{c}\cite{mishchenko2022asynchronous}, \textsuperscript{d}\cite{ghadimi2013stochastic}, \textsuperscript{e}\cite{lan2012optimal}.
    \end{tablenotes}
    \end{threeparttable}
\end{table*}

\paragraph{Black-box conversion for a given quantile delay.}
 
Our first contribution is a simple black-box procedure for transforming standard stochastic optimization algorithms into asynchronous optimization algorithms with convergence rates depending on the median delay, and more generally---on any quantile of choice of the delay sequence.
More specifically, given any $q$ and an upper bound $\delayqq$ over the $q$-quantile delay, and given virtually any standard stochastic first-order optimization method, our procedure produces an asynchronous optimization method whose convergence rate depends on $\delayqq$ rather than on the average delay.
When coupled with SGD for non-convex or convex Lipschitz objectives, and with accelerated SGD for convex smooth objectives, we establish state-of-the-art convergence results in the respective asynchronous optimization settings; these are detailed in the third column of \cref{tab:comparison}.

The guarantees for non-convex and convex optimization improve upon previous results that depend on the average delay bound \citep{cohen2021asynchronous,mishchenko2022asynchronous,koloskova2022sharper,feyzmahdavian2023asynchronous}. When considering $q=0.5$ for example, the improvement follows since the median is always bounded by twice the average delay, but it can also be arbitrarily smaller than the average delay.\footnote{\label{footnote:med-avg} Indeed, this follows from a simple application of Markov's inequality: $\delaymed \leq \delayavg / \Pr(d_t \geq \delaymed) \leq 2 \delayavg$, where $t$ is chosen uniformly at random.  On the other hand, for the delay sequence $d_t = \ind{t > T/2+1} (t-1)$, we have $\delaymed=0$ and $\delayavg=\Omega(T)$.}
In \cref{sec:avg-optimality} we further demonstrate cases where the median itself is significantly larger than other, smaller quantiles of the sequence; our procedure is able to provide bounds depending on the latter, provided an upper bound on the respective quantile delay.

Furthermore, to our knowledge, this result yields the first accelerated convergence guarantee for convex smooth problems with arbitrary delays.
The results are established in a fully black-box manner, requiring only the simpler analyses of classical methods instead of specialized analyses in the asynchronous setting.
We also note that many more convergence results may be obtained using our procedure with other standard optimization methods, for example using SGD for strongly convex objectives, results with high-probability guarantees and with state-of-the-art adaptive methods such as AdaGrad-norm \citep{ward2020adagrad,Faw2022ThePO,attia2023sgd,liu2023high} and other parameter-free methods \citep{attia2024free,khaled2024tuning,kreisler2024accelerated}.

\paragraph{Black-box conversion for all quantiles simultaneously.}

Our second main contribution is a more intricate black-box procedure which does not require any upper bound on the delays but instead adaptively and automatically matches the convergence rate corresponding to the best quantile-delay bound in hindsight.
The fourth column of \cref{tab:comparison} presents the results of the procedure when coupled with SGD and accelerated SGD.

The results provide the first convergence guarantees for asynchronous non-convex smooth and convex Lipschitz problems that does not require any bound of the average delay (or on the number of machines), and accelerated rates for convex smooth optimization.
In addition, we recall that the average and median delays could be overly pessimistic compared to other quantile delays (we discuss this in \cref{sec:avg-optimality}),
while our adaptive quantile guarantee automatically adjusts to the best quantile bound in hindsight and therefore avoids suboptimal factors in its convergence rate.

\paragraph{Experimental Evaluation.}
Finally, we demonstrate that coupling SGD with our first proposed black-box conversion improves performance over vanilla asynchronous SGD when training a neural network on a classification task in a simulated asynchronous computation setting.

\subsection{Discussion and Relation to Previous Work}
\label{sec:comparison}

\paragraph{Fixed delay model.}

Earlier work on delayed stochastic optimization considered a model where the delay is fixed. The work of \citet{arjevani2020tight} on quadratic objectives followed by the results of \citet{stich2020error} for general smooth objectives established that the delay parameter affects the ``deterministic'' low-order convergence term of SGD, which can be extended to a dependence on the maximal delay in the more general arbitrary delay model. 
This dependence on the maximal delay is the best one can achieve for (fixed stepsize) vanilla SGD in the arbitrary delayed model, without further modifications to the algorithm; for completeness, we prove this formally in \cref{sec:async-sgd-max-lower-bound}.

\paragraph{Bounded average delay.}

Several recent works targeted the arbitrary delay model, devising methods which obtain better convergence rates that does not depend on the maximal delay. \citet{cohen2021asynchronous} proposed the ``picky SGD'' method, where stale gradients are ignored if they are far from the current step, obtaining rates that depends on the average delay for non-convex and convex-smooth problems. Although their method does not implicitly require a known delay bound, to ensure convergence given a fixed steps budget an average delay bound is required. In addition, their method provides a guarantee only for the best encountered point (which cannot be computed efficiently) instead of the output of the algorithm, and non-accelerated rates for convex-smooth objectives.
\citet{feyzmahdavian2023asynchronous} provides a guarantee for convex smooth optimization that depends on the average delay by filtering gradients with delay larger than twice the average delay bound (which is essentially used as a median delay bound). Similarly to the other works, a known bound is required and only a non-accelerated rate is provided.
\citet{aviv2021asynchronous} is the only work we are aware of that does not require a known delay bound, but their result holds only for convex-smooth problems and does not yield accelerated rates.
Our quantile-adaptive results, on the other hand, avoid the limitations mentioned above.

Two later studies focused on the arbitrary delay model but under the assumption of a bounded number of machines \citep{koloskova2022sharper,mishchenko2022asynchronous}. By adjusting the stepsizes of SGD according to this quantity they provided guarantees that depends on the number of machines.
As previously observed, the average delay is in fact bounded by the number of machines (see \cref{sec:avg-machines-bound} for a short proof) and as such we improve upon these results in a similar fashion.%

\paragraph{Other delay models.}

While the arbitrary delay model has garnered attention in recent years, additional delay models were considered in previous work. \citet{sra2016adadelay} studied the convergence of asynchronous SGD while either assuming a uniform delay distribution with a bounded support or delay distributions with bounded first and second moments.
Closely related to our paper is the work by \citet{tyurin2024optimal}, who use mini-batching to achieve guarantees for non-convex, convex and convex smooth optimization (included accelerated rates) using standard optimization methods. The asynchronous model of \cite{tyurin2024optimal} is limited to a constant compute time per machine with a finite number of machines, and does not support variation in delays due to variable communication times and failures of machines, like the arbitrary delays does. Our approach may be viewed as a considerable generalization of the asynchronous mini-batching technique to the arbitrary delay model, achieving adaptivity to the delay quantiles instead of the top-$m$ machines as in \cite{tyurin2024optimal}.
Yet another delay model studied in several works involves delayed updates to specific coordinates in a shared parameter vector \citep{recht2011hogwild,mania2017perturbed,leblond2018improved}.

\paragraph{Asynchronous mini-batching.}

Several works have previously used mini-batching in an asynchronous setting.
\citet{feyzmahdavian2016asynchronous} considered the use of mini-batching at the worker level, using a maximal delay bound and achieving sub-optimal results with respect to the maximal delay. We on the other hand use mini-batching across workers to reduce the effect of stale gradients.
\citet{dutta2018slow} discussed several variants of synchronous and asynchronous SGD (including \emph{K-batch sync SGD} which is similar to our use of asynchronous mini-batching), focusing on the wall clock-time  given certain distribution assumptions on the compute time per worker. While providing insight regarding the runtime of the variants, they do not consider the fact that one can increase the batch size of SGD to a certain point (at the cost of fewer update steps) without degrading the theoretical performance.
As mentioned above, the closely related ``Rennala SGD'' method of \citet{tyurin2024optimal} used asynchronous mini-batching in a more restricted fixed computation model, where each machine $m$ computes gradients at a constant $\Delta_m$ time.

\paragraph{Beyond homogeneous i.i.d.\ data.}
One way to move beyond the standard assumption of homogeneous i.i.d.\ data is to consider asynchronous optimization with heterogeneous data, where different workers access disjoint subsets of the training set \citep{mishchenko2022asynchronous,koloskova2022sharper,tyurin2024optimal,islamov2024asgrad}. Prior work in this setting often relies on additional structural assumptions--for example, fixed per-machine delays \citep[Theorem A.4]{tyurin2024optimal} or specific distributional assumptions \citep[Assumption 2]{mishchenko2022asynchronous}--which are typically required to ensure that data on slower machines is accessed frequently enough for effective optimization.
Moreover, recent work has explored the impact of Markovian sampling strategies, where temporally correlated samples induce biased gradient oracles, as commonly encountered in reinforcement learning, on optimization with delayed feedback \citep{adibi2024stochastic, dal2024dasa}. Extending our framework to accommodate such forms of data heterogeneity and temporal dependence presents a promising direction for future research.

\paragraph{Delayed feedback in online learning and Multi-Armed Bandits.}
Scenarios of delayed feedback were also studied in the literature on online learning and multi-armed bandits~\citep{dudik2011efficient,joulani2013online,vernade2017stochastic,gael2020stochastic}. 
In particular, the work of \citet{lancewicki2021stochastic} achieves a regret guarantee with an adaptivity to the quantiles of the delays distribution, analogous to our result in the stochastic optimization setup.

\paragraph{Concurrent work.}
Recently and concurrently to our work, \citet{tyurin2024tight} and \citet{maranjyan2024mindflayer} studied delay models that extend the fixed-compute framework of \citet{tyurin2024optimal}.
Most relevant to our work, \citet{tyurin2024tight} demonstrated that SGD with gradient filtering and accumulation is optimal in their time-delay model, by proving a lower bound for zero-respecting first-order algorithms.
While their computational model accommodates arbitrary delays, it deviates from prior asynchronous frameworks (e.g., \citealp{cohen2021asynchronous, mishchenko2022asynchronous, koloskova2022sharper}) and results in recursive convergence bounds that lack direct interpretation.
In contrast, our approach provides more informative guarantees derived from natural empirical quantities, such as the quantiles of observed delays.
\citet{maranjyan2024mindflayer}, on the other hand, addresses scenarios where worker computations may take arbitrarily long or hang indefinitely.
Notably, their method requires
prior knowledge of delay distributions and allowing worker restarts mid-computation.

\section{Preliminaries}
\label{sec:preliminaries}

\subsection{Stochastic Optimization Setup}
In this work we are interested in minimization of a differentiable function $f : \domain \to \reals$ for some convex set $\domain \subseteq \reals^d$. We assume a standard stochastic first-order model, in which we are provided with an unbiased gradient oracle $\oracle : \domain \to \reals^d$ with bounded variance, i.e., for any $w$,
\begin{align*}
    \E[\oracle(w)] = \nabla f(w) \quad \text{and} \quad 
    \E[\norm{\oracle(w)-\nabla f(w)}^2] \leq \sigma^2
\end{align*}
for some $\sigma \geq 0$.
In the following sections we will discuss algorithms which receive an initialization $w_1 \in \domain$, performs $T$ gradient queries and output a point $\wout$.
We consider the following optimization scenarios:
\begin{enumerate}[label=(\roman*),leftmargin=*]%
    \item\label{item:non-convex}
    \emph{\bfseries Non-convex setting.}
    In this case we assume that the domain is unconstrained ($\domain=\reals^d$), $f$ is $\sm$-smooth,\footnote{A function $f$ is said to be $\sm$-smooth if for any $x,y \in \domain$, $\norm{\nabla f(x)-\nabla f(y)} \leq \sm \norm{x-y}$. This condition also implies that for any $x,y \in \domain$, $f(y) \leq f(x) + \nabla f(x) \cdot (y-x) + \tfrac12 \sm \norm{y-x}^2$.} and lower bounded by some $f^\star$. We also assume that the algorithm receives a bound $\fbound \geq f(w_1)-f^\star$.

    \item\label{item:convex}
    \emph{\bfseries Convex non-smooth setting.}
    Here we assume that the domain has a bounded diameter, i.e., for any $x,y \in \domain$, $\norm{x-y} \leq \diam$ for some $\diam > 0$, and that $f$ is convex and $\lip$-Lipschitz over $\domain$.

    \item\label{item:convex-smooth}
    \emph{\bfseries Convex smooth setting.}
    Finally, we also consider the setting where domain is unconstrained ($\domain=\reals^d$) and the objective $f$ is convex, $\sm$-smooth, and admits a minimizer $w^\star \in \argmin f(w)$. We also assume that the algorithm receives a bound $\diam \geq \norm{w_1-w^\star}$.
\end{enumerate}

\subsection{Asynchronous Optimization with Arbitrary Delays}

We consider the same asynchronous optimization model as in \citet{aviv2021asynchronous,cohen2021asynchronous}, which consists of $T$ rounds, where each round involves the following: \emph{(i)} The algorithm chooses a model $w_t$; \emph{(ii)} the algorithm receives a pair $(g_t,d_t)$, where $g_t$ is a stochastic gradient of $f$ at $w_{t-d_t}$.
The delays $d_t$ here are entirely arbitrary, and can be thought of as chosen by an oblivious adversary (in advanced, before the actual optimization process begins). Note this means that the delays are assumed to be independent (in a probabilistic sense) of the stochastic gradients observed during optimization.

Throughout, we let $\delayavg$ denote the average delay; $\delaymed$ denote the median delay; and $\delayqq$ denote the $q$-quantile delay\footnote{We define a $q$-quantile of the delay sequence for $q \in (0,1]$ as any $\delayqq \geq 0$ that satisfies both $\Pr(d \leq \delayqq) \geq q $ and $\Pr(d \geq \delayqq) \geq 1-q$, where $d$ is sampled uniformly from $\set{d_1,\ldots,d_T}$. We also define the $1$-quantile $\delayq{1} \eqdef \delaymax$.}.
We further treat the quantile delays ($\delayqq$ for $q \in (0,1]$) as integers. This can be done without loss of generality because the delays are all integers, so $\floor{\delayqq}$ remains a valid $q$-quantile, and smaller quantile delays tighten our guarantees.

\subsection{Classical Stochastic First-Order Optimization}

Our asynchronous mini-batching technique enables the direct application of classical stochastic optimization methods in a delayed setting. Since we use these methods in a black-box manner, we do not present them here but rather only state the relevant rates of convergence. The exact convergence results we give here are taken from \citet{lan2020first} and are specified in more detail in \cref{sec:standard-convergence}.

Following are the algorithms we use in our work, which receive an initialization $w_1 \in \domain$, performs $T$ gradient queries and output a point $\wout$:
\begin{enumerate}[label=(\roman*),leftmargin=24pt]%
    \item
    \emph{Stochastic gradient descent} \citep[SGD,][]{lan2020first}, which in the non-convex setting guarantees that $\E[\norm{\nabla f(\wout)}^2] = O\brk{\ifrac{\sm \fbound}{T} + \ifrac{\sigma \sqrt{\sm \fbound}}{\sqrt{T}}}$, and in the convex smooth setting provides the guarantee $\E[f(\wout)- f(w^\star)] = O\brk{\ifrac{\sm \diam^2}{T} + \ifrac{\sigma \diam}{ \sqrt{T}}}$ \citep[][Corollary 6.1]{lan2020first}.

    \item
    \emph{Projected stochastic gradient descent} \citep[PSGD,][]{lan2020first}, which in the convex non-smooth setting provides a guarantee of
       $ \E[f(\wout)-\min_{w \in \domain} f(w)] 
        = 
        O\brk{ \ifrac{\diam(\lip+\sigma)}{\sqrt{T}} }$
    \citep[][Theorem 4.1]{lan2020first}.

    \item
    \emph{Accelerated stochastic gradient descent} \citep[accelerated SGD,][]{lan2020first}, which guarantees in the convex smooth setting that 
    \(\E[f(\wout) \!-\! f(w^\star)] = O\brk{\ifrac{\sm \diam^2}{T^2} \!+\! \ifrac{\sigma \diam}{ \sqrt{T}}}\)
    \citep[][Proposition 4.4]{lan2020first}.
\end{enumerate}

\section{Asynchronous Mini-Batching}
\label{sec:median}

In this section we present and analyze a general template for mini-batching with asynchronous computation.
The template, presented in \cref{{alg:async-mini-batch}}, receives a batch size parameter and a stochastic optimization algorithm $\alg(\sigma,K)$, an algorithm performing $K$ point queries for stochastic gradients which has a variance bounded by $\sigma^2$ to compute an output point. Then the template uses the asynchronous computations to simulate mini-batched optimization, ignoring stale gradients with respect to the inner iterations of $\alg$ according to their delays.

Following is our main result about \cref{{alg:async-mini-batch}}, which provides a black-box conversion of standard optimization rates to asynchronous optimization rates using the asynchronous mini-batching scheme.

\begin{algorithm2e}[ht]
    \SetAlgoLined
    \DontPrintSemicolon
    \KwIn{Batch size $\batch$, stochastic optimization algorithm $\alg(\sigma,K)$}
    $t \gets 1$ \Comment*[r]{rounds count}
    \For(\hfill \small\textit{\# queries count})
    {$k \gets 1,2,\ldots,K$}{
        Get query $\wt_k$ from $\alg$\;
        $t_k \gets t$
        \Comment*[r]{first step with $w_t\!=\wt_k$}
        $b \gets 0$\;
        $\tilde g_k \gets 0$\;
        \While{$b < \batch$}{
            Play $w_t = \wt_k$\;
            Receive $(g_t,d_t)$\;
            \If
            (\hfill \textit{\# ensures $w_{t\!-\!d_t}\!=\wt_k$})
            {$t_k \leq t-d_t$}
            {
                $\tilde g_k \gets \tilde g_k + \frac{1}{\batch} g_t$\;
                $b \gets b+1$\;
            }
            $t \gets t+1$\;
        }
        Send response $\tilde g_k$ to $\alg$\;
    }
    Output result produced by $\alg$\;
    \caption{Asynchronous mini-batching} \label{alg:async-mini-batch}
\end{algorithm2e}

\begin{theorem}\label{thm:async-sync-quantile}
    Let $\cF$ be some function class of differentiable functions, let $f \in \cF$ and let $g$ be a stochastic first-order oracle with $\sigma^2$-bounded variance. Let $\alg(\sigma,K)$ be a $K$-query stochastic first-order optimization algorithm for class $\cF$ with a convergence rate guarantee of $\text{Rate}(f,\sigma,K)$. For $T$ asynchronous rounds, let $\delayqq$ be the $q$-quantile delay for some $q \in (0,1]$, and let $\batch=\max\set{1,\delayqqbound}$ for some $\delayqqbound \geq \delayqq$. Then the output of \cref{alg:async-mini-batch} with batch size $\batch$ and $\alg(\sigma/\sqrt{\batch},\floor{q T/(1+2\delayqqbound)})$ has a convergence rate guarantee of
    $
        \text{Rate}(f,\ifrac{\sigma}{\sqrt{\max\set{1,\delayqqbound}}},\floor{\ifrac{q T}{(1+2\delayqqbound)}}).
    $
\end{theorem}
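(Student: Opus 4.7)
The plan is to establish two facts that, together with the convergence guarantee of $\alg$, immediately yield the theorem: (i) each mini-batch $\tilde g_k$ delivered to $\alg$ is an unbiased estimator of $\nabla f(\wt_k)$ with variance at most $\sigma^2/\batch$; and (ii) with the query budget $K = \floor{qT/(1+2\delayqqbound)}$, the total number of asynchronous rounds consumed by \cref{alg:async-mini-batch} never exceeds $T$. Combining these, the execution faithfully simulates $\alg(\sigma/\sqrt{\batch}, K)$ within the round budget, and the stated convergence rate is inherited from $\alg$.

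Fact~(i) follows directly from the acceptance check. The condition $t_k \leq t - d_t$ is triggered only after the algorithm has been playing $w_t = \wt_k$ since round $t_k$, which forces $w_{t-d_t} = \wt_k$; thus every $g_t$ added to $\tilde g_k$ is a fresh oracle call at $\wt_k$. Because the delays are oblivious (independent of the stochastic gradient noise), the set of accepted rounds for batch $k$ is a measurable function of the delay sequence alone, so conditional on $\wt_k$ the $\batch$ accepted gradients are i.i.d.\ unbiased draws; averaging them gives variance at most $\sigma^2/\batch$ by the standard mini-batch computation.

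The main obstacle is fact~(ii). I would fix a batch $k$, let $L_k$ be the number of rounds it consumes, and split its rounds into the first $\min(L_k,\delayqqbound)$ ``early'' rounds and the remaining ``late'' rounds. The crucial observation is that any late round $t$ satisfies $t - t_k \geq \delayqqbound$, so if $d_t \leq \delayqqbound$ then automatically $d_t \leq t-t_k$ and the round is accepted; equivalently, every \emph{rejected} late round must have $d_t > \delayqqbound$. Since the batch halts as soon as $\batch$ gradients are accepted, the number of accepted late rounds is at most $\batch$. Letting $m_k$ denote the number of rounds in batch $k$ with $d_t > \delayqqbound$, this yields at most $\batch + m_k$ late rounds, and together with at most $\delayqqbound$ early rounds we obtain $L_k \leq \delayqqbound + \batch + m_k$.

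Summing over $k=1,\ldots,K$ and using $\batch=\max\set{1,\delayqqbound}$ (which gives $\delayqqbound + \batch \leq 1 + 2\delayqqbound$), the total number of used rounds is at most $K(1+2\delayqqbound) + \sum_{k=1}^K m_k$. The rounds counted in $\sum_k m_k$ form a disjoint collection of indices in $\set{1,\ldots,T}$ on which $d_t > \delayqqbound \geq \delayqq$, so by the definition of the $q$-quantile delay their total is at most $(1-q)T$. Substituting $K = \floor{qT/(1+2\delayqqbound)}$ yields a total of at most $qT + (1-q)T = T$ rounds, proving~(ii) and completing the proof.
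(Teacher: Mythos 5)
Your proof takes essentially the same route as the paper and rests on the identical key observation: once a batch has run for $\delayqqbound$ rounds, any further round with delay at most $\delayqqbound$ is automatically accepted. The paper (its Lemma~\ref{lem:template-updates}) phrases this dually---it bounds the number of \emph{low-delay} rounds any single batch can absorb by $\batch+\delayqq$ and then partitions the $\geq qT$ low-delay rounds into blocks of that size, each of which forces a batch to complete---whereas you bound the \emph{total} length of each batch by $\delayqqbound+\batch+m_k$ and then sum. Both decompositions are correct and use the same inequality on accepted late rounds, so the difference is purely bookkeeping.

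There is, however, one genuine soft spot in your version. You assert that ``the total number of used rounds is at most $K(1+2\delayqqbound)+\sum_{k=1}^K m_k$'' and then invoke $\sum_k m_k \leq (1-q)T$; but the latter only holds because the rounds in question lie in $\{1,\dots,T\}$, which presupposes exactly the conclusion you are proving (that $K$ batches fit in $T$ rounds). The fix is to argue by contradiction, as the paper does: assume only $K'<K$ batches complete within the $T$ rounds, apply the per-batch bound to batches $1,\dots,K'$ and to the partial batch $K'+1$, and crucially note that the partial batch has at most $\batch-1$ acceptances, so $L'_{K'+1}\leq \delayqqbound+\batch-1+m_{K'+1}$. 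Without that $-1$, in the boundary case $qT/(1+2\delayqqbound)\in\naturals$ the inequalities only give $K'+1\geq K$, not $K'\geq K$. With the $-1$ one obtains $qT+1\leq (K'+1)(1+2\delayqqbound)$ and hence $K'\geq K$, a contradiction. So the overall approach is right and matches the paper's, but your summation step needs to be recast as a contradiction with the incomplete last batch accounted for explicitly.
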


\cref{thm:async-sync-quantile} uses a delay bound of the $q$-quantile delay and an optimization algorithm, inheriting the convergence rate of the original algorithm with a modified effective number of steps that depends on the delay quantile and decreased stochastic variance bound due to the mini-batching.
Note that the above guarantee can be significantly stronger than that of naive synchronous mini-batching, where the iteration time is determined by the slowest machine, since the proposed method allows faster machines to contribute multiple times per batch.
To better understand the result we present the following corollary, which is a direct application of the theorem with the classical stochastic methods detailed in \cref{sec:standard-convergence}. 

\begin{corollary}\label{cor:median}
    Let $f : \domain \to \reals$ be a differentiable function, $g : \domain \to \reals^d$ a first-order oracle of $f$ with variance bounded by $\sigma^2 \geq 0$, $T>0$ the number of asynchronous rounds, $q \in (0,1]$ and $\delayqqbound>0$ such that $\delayqq \leq \delayqqbound$, where $\delayqq$ is the $q$-quantile delay. Then the following holds:
    \begin{enumerate}[label=(\roman*),leftmargin=*]
        \item%
        In the non-convex smooth setting, \cref{alg:async-mini-batch} with tuned SGD and batch size of $\max\set{1,\delayqqbound}$ produce $\wout$ which satisfy
        \begin{align*}
            \E[\norm{\nabla f(\wout)}^2]
            =
            O\brk*{
                \frac{(1+\delayqqbound) \sm \fbound}{q T}
                + \frac{\sigma \sqrt{\sm \fbound}}{\sqrt{q T}}
            }
            .
        \end{align*}
        \item%
        In the convex smooth setting, \cref{alg:async-mini-batch} with tuned SGD and batch size of $\max\set{1,\delayqqbound}$ produce $\wout$ which satisfy
        \begin{align*}
            \E[f(\wout)-f(w^\star)]
            =
            O\brk*{
                \frac{(1+\delayqqbound) \sm \diam^2}{q T}
                + \frac{\diam \sigma}{\sqrt{q T}}
            }
            .
        \end{align*}
        \item%
        In the convex smooth setting, \cref{alg:async-mini-batch} with tuned accelerated SGD and batch size of $\max\set{1,\delayqqbound}$ produce $\wout$ which satisfy
        \begin{align*}
            \E[f(\wout)-f(w^\star)]
            =
            O\brk*{
                \frac{(1+\delayqqbound)^2 \sm \diam^2}{(q T)^2}
                + \frac{\diam \sigma}{\sqrt{q T}}
            }
            .
        \end{align*}
        \item%
        In the convex Lipschitz setting, \cref{alg:async-mini-batch} with tuned projected SGD and batch size of $\max\set{1,\delayqqbound}$ produce $\wout$ which satisfy
        \begin{align*}
            \E[f(\wout)-\min_{w \in \domain} f(w)]
            =
            O\brk*{
                \frac{\diam(\sqrt{1+\delayqqbound} \lip+\sigma)}{\sqrt{q T}}
            }
            .
        \end{align*}
    \end{enumerate}
\end{corollary}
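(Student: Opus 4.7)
The plan is to derive each of the four bounds as a direct corollary of Theorem~\ref{thm:async-sync-quantile} by plugging in the convergence rate of the appropriate classical algorithm listed in Section~\ref{sec:preliminaries}. In each case the recipe is the same: set $\batch = \max\{1,\delayqqbound\}$, so that the inner algorithm $\alg$ is invoked with effective variance bound $\sigma^2/\batch$ and effective horizon $K = \lfloor qT/(1+2\delayqqbound) \rfloor$, and then substitute these into the stated convergence rate. Since the rates depend on $K$ only through terms of the form $c/K$ (or $c/K^2$) and $c/\sqrt{K}$, the relations
\[
\frac{1}{K} = O\!\brk*{\frac{1+\delayqqbound}{qT}}, \qquad \frac{1}{\sqrt{K}} = O\!\brk*{\sqrt{\frac{1+\delayqqbound}{qT}}}
\]
will yield the claimed asymptotics. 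The only subtlety in the variance term is that $\sigma^2$ gets replaced by $\sigma^2/\max\{1,\delayqqbound\}$, but this extra factor exactly cancels against the $\sqrt{1+\delayqqbound}$ that otherwise would appear from the $1/\sqrt{K}$ term, leaving the clean $\sigma/\sqrt{qT}$ dependence in the stochastic terms of (i)--(iii).

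For part (i) I would plug SGD's non-convex guarantee $O(\sm \fbound/K + \sigma'\sqrt{\sm \fbound}/\sqrt{K})$ with $\sigma' = \sigma/\sqrt{\max\{1,\delayqqbound\}}$. The first term becomes $O((1+\delayqqbound)\sm \fbound/(qT))$ and the second becomes $O(\sqrt{(1+\delayqqbound)/(qT)} \cdot \sigma/\sqrt{\max\{1,\delayqqbound\}} \cdot \sqrt{\sm \fbound}) = O(\sigma\sqrt{\sm \fbound}/\sqrt{qT})$, as desired. Part (ii) is identical in structure, using SGD's convex-smooth rate $O(\sm \diam^2/K + \sigma' \diam/\sqrt{K})$. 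Part (iii) is the same idea applied to accelerated SGD, whose deterministic term is $O(\sm \diam^2/K^2)$, producing $O((1+\delayqqbound)^2 \sm \diam^2/(qT)^2)$.

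Part (iv) is slightly different because the PSGD rate $O(\diam(\lip + \sigma')/\sqrt{K})$ has no deterministic $1/K$ term, so the $\lip$ dependence picks up the extra $\sqrt{1+\delayqqbound}$ factor that would otherwise be cancelled. Concretely, $\diam \lip/\sqrt{K} = O(\diam \lip \sqrt{(1+\delayqqbound)/(qT)})$ while $\diam \sigma'/\sqrt{K} = O(\diam \sigma/\sqrt{qT})$, matching the stated bound $O(\diam(\sqrt{1+\delayqqbound}\,\lip + \sigma)/\sqrt{qT})$.

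I expect no real obstacle here; the proof is essentially bookkeeping. The one minor point to handle carefully is the floor in $K = \lfloor qT/(1+2\delayqqbound)\rfloor$: if $qT < 1+2\delayqqbound$ then $K=0$ and the statement is vacuous (the bounds are $\Omega(1)$ and trivially dominate typical problem-dependent norms via standard gradient-descent analysis), while otherwise $K \geq qT/(2(1+2\delayqqbound))$, which absorbs into the hidden constants of the $O(\cdot)$. With that dispatched, each of (i)--(iv) follows by a one-line substitution into Theorem~\ref{thm:async-sync-quantile}.
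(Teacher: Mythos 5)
Your proposal is correct and follows the same route as the paper: apply Theorem~\ref{thm:async-sync-quantile} and substitute the resulting effective variance $\sigma/\sqrt{\max\{1,\delayqqbound\}}$ and effective horizon $\floor{qT/(1+2\delayqqbound)}$ into the classical rates from \cref{sec:standard-convergence}. The paper states this in one line; you simply make the bookkeeping (the cancellation of the $\sqrt{1+\delayqqbound}$ factor against the reduced variance, and the treatment of the floor) explicit, which is a faithful unpacking of what the paper leaves implicit.
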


\begin{proof}
    The result follows by a simple application of \cref{thm:async-sync-quantile}, which provides a rate of
    \begin{align*}
        \text{Rate}&(f,\ifrac{\sigma}{\sqrt{\max\set{1,\delayqqbound}}},\floor{\ifrac{q T}{(1+2\delayqqbound)}}),
    \end{align*}
    with each of the standard convergence guarantees (\cref{thm:sgd-non-convex,thm:sgd-convex-lipschitz,thm:sgd-smooth-convex,thm:asgd-smooth-convex} in \cref{sec:standard-convergence}).
\end{proof}

We shortly remark that, as the median delay is bounded by twice the average delay and may be much smaller, and as the number of machines is always larger than the average delay (see \cref{sec:avg-optimality,sec:avg-machines-bound} for details), our results provide tighter guarantees than existing results that depend on an average delay bound or the number of machines \citep{cohen2021asynchronous,mishchenko2022asynchronous,koloskova2022sharper,feyzmahdavian2023asynchronous}, without requiring a specialized optimization analysis in the asynchronous setting. In addition, we are the first to provide an accelerated rate in the convex smooth setting.

Following is the key lemma for proving \cref{thm:async-sync-quantile}, which provides a condition to ensure that $\alg$ receives sufficiently many updates so as to produce a effective result. Its proof follows.

\begin{lemma}\label{lem:template-updates}
    Let $\batch \in \naturals$ and $\alg(\sigma,K)$ a stochastic optimization algorithm for some $K \in \naturals$ and $\sigma \geq 0$. Then when running \cref{alg:async-mini-batch} with batch size $\batch$ and algorithm $\alg$ for $T$ asynchronous rounds, a sufficient condition for an output to be produced is
    \begin{align*}
        K \leq \sup_{q \in (0,1]} \floor*{\frac{q T}{\batch + \delayqq}}.
    \end{align*}
\end{lemma}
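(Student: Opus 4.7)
The plan is to fix an arbitrary $q \in (0,1]$ and show that whenever $K \le \lfloor qT/(\batch+\delayqq)\rfloor$, the algorithm produces an output within the $T$ available asynchronous rounds; taking the supremum over $q$ then yields the stated sufficient condition. I would argue by contradiction: suppose all $T$ rounds are consumed without completing $K$ outer iterations, and derive a lower bound on $K$ that contradicts the hypothesis.

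Call a round $t$ \emph{accepted} if it satisfied the test $t_k \le t - d_t$ in \cref{alg:async-mini-batch} and was added to the current mini-batch, and \emph{rejected} otherwise. The core observation I would isolate is a per-iteration bound on rejected rounds with small delay: within the $k$-th iteration (which begins at step $t_k$), any round $t$ with $d_t \le \delayqq$ and $t \ge t_k + \delayqq$ automatically satisfies $d_t \le \delayqq \le t - t_k$, so it is accepted. Consequently, rejected rounds of iteration $k$ with $d_t \le \delayqq$ lie entirely in the window $t \in [t_k, t_k + \delayqq - 1]$, contributing at most $\delayqq$ such rounds per iteration. This is the crux of the argument and where the additive $\delayqq$ in the denominator of the bound comes from.

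Next I would count globally. Summing the per-iteration bound over the (at most $K$) started iterations gives at most $K\delayqq$ rejected rounds with $d_t \le \delayqq$. The remaining rejected rounds satisfy $d_t > \delayqq$, and by the quantile definition $|\{t \in [T] : d_t > \delayqq\}| \le (1-q)T$. Because the $K$-th iteration is assumed incomplete, the number of accepted rounds is strictly less than $K\batch$. Adding these three contributions and equating with the $T$ rounds used yields
\begin{align*}
    T \;<\; K\batch \;+\; K\delayqq \;+\; (1-q)T,
\end{align*}
which rearranges to $K > qT/(\batch+\delayqq)$. Since $K$ is an integer, this forces $K > \lfloor qT/(\batch+\delayqq)\rfloor$, contradicting the hypothesis and completing the proof.

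The main obstacle I anticipate is justifying the bookkeeping cleanly, in particular the per-iteration bound on rejected rounds with $d_t \le \delayqq$ (the window-of-size-$\delayqq$ argument) and making sure the incomplete iteration is handled correctly in the global count; both rely only on the algorithm's acceptance rule and the definition of the $q$-quantile, so once stated the remainder is an elementary arithmetic manipulation.
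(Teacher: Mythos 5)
Your proof is correct and rests on the same key observation as the paper's: within iteration $k$, any round $t$ with $d_t \le \delayqq$ and $t - t_k \ge \delayqq$ must be accepted, so rejected small-delay rounds per iteration are confined to a window of size $\delayqq$. The paper packages this as the bound $n(k) \le \batch + \delayqq$ on the \emph{total} small-delay rounds per iteration and then counts only small-delay rounds across iterations, whereas you partition all $T$ rounds into accepted, rejected-small-delay, and rejected-large-delay and sum the three bounds against $T$; the two bookkeeping schemes are equivalent (the paper's per-iteration bound is just your $\delayqq$ plus the trivial $\batch$ acceptances). Your variant is arguably a bit more transparent in the final arithmetic, at the small cost of also invoking the $(1-q)T$ bound on large-delay rounds, which the paper's count avoids by ignoring large-delay rounds entirely.
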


\begin{proof}[Proof of \cref{lem:template-updates}]%
    Let $K'$ be the number of responses $\alg$ receives and assume by contradiction that $K' < K$. Let $q \in (0,1]$. At least $\ceil{q T}$ rounds have a delay less or equal $\delayqq$.
    Let $n(k)$ be the number of asynchronous rounds with delay less or equal $\delayqq$ at times $t_k \leq t < t_{k+1}$.
    Assume by contradiction that $n(k) > \batch+\delayqq$ for some $k$, and let $S = \brk{a_1,\ldots,a_{n(k)}}$ be the rounds during the $k$'th iteration with $d_{a_i} \leq \delayqq$ ordered in an increasing order. For any $i > \delayqq$ (which implies that $i \geq \delayqq + 1$ as both are integers), as the sequence is non-decreasing and $a_1 \geq t_k$,
    \begin{align*}
        a_i - t_k \geq a_i-a_1 \geq i-1 \geq \delayqq \geq d_{a_i},
    \end{align*}
    which means that the inner ``if'' condition is satisfied and contradicts the condition of the while loop, $b \leq \batch$, since there are at least $\batch+1$ rounds $a_i \in S$ with $i > \delayqq$ (because our assumption by contradiction yields $\abs{S} \geq \batch+\delayqq+1$ as we deal with integers). Thus, $n(k) \leq \batch+\delayqq$ for all $k \in [K']$. As each $\batch+\delayqq$ consecutive rounds with delay less or equal $\delayqq$ must account for a response to $\alg$, and we have at least $\ceil{q T}$ such rounds,
    \begin{align*}
        K'
        &\geq
        \floor*{\frac{qT}{\batch+\delayqq}}
        .
    \end{align*}
    As this is true for any $q \in (0,1]$,
    \begin{align*}
        K' &\geq \sup_{q \in (0,1]} \floor*{\frac{qT}{\batch+\delayqq}} \geq K
    \end{align*}
    which contradicts the assumption $K' < K$.
\end{proof}

We proceed to prove the main result.

\begin{proof}[Proof of \cref{thm:async-sync-quantile}]
    By \cref{lem:template-updates} $\wout$ is produced since
    \begin{align*}
        \sup_{q' \in (0,1]} \floor*{\frac{q' T}{\batch + \delayq{q'}}} \geq \floor*{\frac{q T}{\batch + \delayqq}} \geq \floor*{\frac{q T}{1+2 \delayqqbound}}=K.
    \end{align*}
    The $t_k \leq t-d_t$ condition ensures that the gradient at round $t$ is of $\wt_k$, as $w_t=\wt_k$ for all $t_k \leq t < t_{k+1}$. We conclude using the linearity of the variance to bound the variance of the mini-batch,
    \begin{align*}
        \E\brk[s]*{\norm{\tilde{g}_k-\nabla f(\wt_k)}^2}
        &\leq \frac{\sigma^2}{\max\set{1,\delayqqbound}},
    \end{align*}
    invoking the rate guarantee of
    \begin{align*}
    \alg&(\sigma/\sqrt{\max\set{1,\delayqqbound}},\floor{\ifrac{q T}{(1+2 \delayqqbound)}})
    .
    \qedhere
    \end{align*}
\end{proof}

\section{Quantile Adaptivity}
\label{sec:quantile}

The method described in the previous section require as input an upper bound of a given quantile of the delays. Next, we show how to achieve more robust guarantees without prior knowledge about the delays, by establishing guarantees that are competitive with respect to all quantiles simultaneously.

First, let us clarify our objective. We begin with the smooth non-convex case. Let $q \in (0,1]$ be an arbitrary quantile. Using \cref{cor:median} with the tight quantile delay bound $\delayqq$, we can obtain a rate of
\begin{align*}
    O\brk*{
        \frac{(1+\delayqq) \sm \fbound}{q T} + \frac{\sigma \sqrt{\sm \fbound}}{\sqrt{qT}}
    }.
\end{align*}
As each $q \in (0,1]$ produce a difference guarantee, our objective is to provide a guarantee of
\begin{align*}
    O\brk*
    {
        \inf_{q \in (0,1]} \frac{(1+\delayqq) \sm \fbound}{q T} +\frac{\sigma \sqrt{\sm \fbound}}{\sqrt{qT}}
    }
    ,
\end{align*}
without knowing in advance which quantile $q$ achieves this infimum.

Our quantile-adaptive template of asynchronous mini-batching appears in \cref{alg:async-mini-batch-sweep}. Since the best quantile (for which the best convergence rate is achieved) is not known in advance, it cannot be used to tune the batch size. For that reason, the template extends \cref{alg:async-mini-batch} by performing multiple asynchronous mini-batching iterations with different batch sizes and time horizons, essentially making it into an any-time algorithm using a doubling procedure \citep{cesa2006prediction}. By carefully selecting the parameters, we can achieve the desired quantile adaptivity.

\begin{algorithm2e}[ht]
    \SetAlgoLined
    \DontPrintSemicolon
    \KwIn{Batch sizes $\batch_i$, Epoch lengths $K_i$, $K$-query algorithm $\alg(K)$\footnotemark}
    $t \gets 1$
    \Comment*[r]{rounds count}
    \For{$i \gets 1,2,\ldots$}{
        Initialize $\alg(K_i)$\;
        \For(\hfill \small\textit{\# query count at i\textsuperscript{th} epoch})
        {$k \gets 1,2,\ldots,K_i$}{
            Get query $\wt_k^{(i)}$ from $\alg$\;
            $t_k^{(i)} \gets t$
            \Comment*[r]{first step with $w_t = \wt_k^{(i)}$}
            $b \gets 0$\;
            $\tilde g_k^{(i)} \gets 0$\;
            \While{$b < \batch_i$}{
                Play $w_t=\wt_k^{(i)}$\;
                Receive $(g_t,d_t)$\;
                \If
                (\hfill \small\textit{\# ensures $w_{t\!-\!d_t}\!=\!\wt_k^{(i)}$})
                {$t_k^{(i)} \leq t-d_t$}
                {
                    $\tilde g_k^{(i)} \gets \tilde g_k^{(i)} + \frac{1}{\batch_i} g_t$\;
                    $b \gets b+1$\;
                }
                $t \gets t+1$\;
            }
            Send response $\tilde g_k^{(i)}$ to $\alg$\;
        }
        Receive output $\wout_i$ from $\alg$\;
    }
    \caption{Asynchronous mini-batching sweep} \label{alg:async-mini-batch-sweep}
\end{algorithm2e}

Following is our second main result which provides quantile adaptivity by coupling \cref{alg:async-mini-batch-sweep} with SGD and accelerated SGD. Additional results using SGD in the convex and convex smooth settings appear in \cref{sec:additioan-quantile-theorems}.

\begin{theorem}\label{thm:quantile-collection}
    Let $\domain \subseteq \reals^d$ be a convex set, $f : \domain \to \reals$ be a differentiable function, $g : \domain \to \reals^d$ an unbiased gradient oracle of $f$ with $\sigma^2$-bounded variance, $T \in \naturals$ and $w_1 \in \domain$.
    Running \cref{alg:async-mini-batch-sweep} for $T$ asynchronous steps with some parameters $\alg$, $\batch_1,\batch_2,\ldots$ and $T_1,T_2,\ldots$, let $W=\brk{\wout_1,\ldots,\wout_I}$ be the outputs of $\alg$, and let $\wout$ be $\wout_I$ if $W$ is non-empty and $w_1$ otherwise. Then the following holds:
    \begin{enumerate}[label=(\roman*),leftmargin=*]
        \item%
        \label{thm:quantile-collection-non-convex}
        If $\domain=\reals^d$, $f$ is $\sm$-smooth and lower bounded by $f^\star$, setting $\alg(K)$ to be $K$-steps SGD initialized at $w_1$ with stepsize $\frac{1}{\sm}$,
        \begin{align*}
            K_i = 2^{i-1}
            \quad\text{and}\quad
            \batch_i = \max\set*{1,\ceil*{\frac{\sigma^2 K_i}{2 \sm \fbound}}},
        \end{align*}
        for some $\fbound \geq f(w_1)-f^\star$, then
        \begin{align*}
            \E\brk[s]*{\norm{\nabla f(\wout)}^2}
            & \! \leq
            \!\! \inf_{q \in (0,1]} \!\!
            \frac{24 (1+2\delayqq) \sm \fbound}{q T} \!+\! \frac{24 \sigma \sqrt{\sm \fbound}}{\sqrt{q T}}
            .
        \end{align*}

        \item%
        \label{thm:quantile-collection-convex-smooth-accelerated}
        If $\domain=\reals^d$, $f$ is convex, $\sm$-smooth and admits a minimizer $w^\star$, setting $\alg(K)$ to be $K$-steps accelerated SGD initialized at $w_1$ with stepsize $\frac{1}{4 \sm}$, $K_i = 2^{i-1}$, and
        \begin{align*}
            \batch_i = \max\set*{1,\ceil*{\frac{\sigma^2 K_i (K_i+1)^2}{12 \sm^2 \diam^2}}}.
        \end{align*}
        for some $\diam \geq \norm{w_1-w^\star}$, then $\E\brk[s]*{f(\wout)-f(w^\star)}$ is bounded by
        \begin{align*}
            \inf_{q \in (0,1]}
            \frac{192 (1+2\delayqq)^2 \sm \diam^2}{q^2 T^2} + \frac{72 \sigma \diam}{\sqrt{q T}}
            .
        \end{align*}
    \end{enumerate}
\end{theorem}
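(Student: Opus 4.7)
The plan is to fix an arbitrary quantile $q \in (0,1]$, derive a $q$-dependent rate for the output $\wout_I$ of the last completed epoch, and finally take the infimum over $q$. The central challenge is that, unlike in \cref{cor:median}, the batch size $\batch_i$ is not tuned to $q$ but only to $K_i$, so one must argue that the doubling sweep still yields a completed epoch whose inner iteration count $K_I$ is large enough to match the quantile-$q$-tuned guarantee up to constants.

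First, I would use \cref{lem:template-updates} to bound the rounds consumed by each epoch: since the sufficient condition $K_i \leq \floor{qT_i/(\batch_i+\delayqq)}$ is satisfied whenever $T_i \geq (K_i+1)(\batch_i+\delayqq)/q$, epoch $i$ is guaranteed to complete within $T_i^* = O(K_i(\batch_i+\delayqq)/q)$ rounds. Letting $J$ be the largest index with $\sum_{i \leq J} T_i^* \leq T$, every epoch up to $J$ finishes within the $T$-round budget and therefore $K_I \geq K_J$. The second step is to evaluate this geometric sum using the explicit batch schedule. For the non-convex part, $\batch_i \leq 1 + \sigma^2 K_i/(2\sm\fbound)$ yields $\sum_{i\leq J} T_i^* = O((1+\delayqq)K_J/q + \sigma^2 K_J^2/(\sm\fbound q))$, dominated by its last term due to the doubling of $K_i$, so $K_J = \Omega(\min\{qT/(1+\delayqq),\, \sqrt{\sm\fbound qT}/\sigma\})$. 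For the accelerated part, $\batch_i \leq 1 + \sigma^2 K_i(K_i+1)^2/(12\sm^2\diam^2)$ gives $\sum_{i \leq J} T_i^* = O((1+\delayqq)K_J/q + \sigma^2 K_J^4/(\sm^2\diam^2 q))$, hence $K_J = \Omega(\min\{qT/(1+\delayqq),\, (\sm^2\diam^2 qT/\sigma^2)^{1/4}\})$.

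The third step is to plug these lower bounds into the standard SGD and accelerated SGD guarantees applied to the epoch-$I$ run, which by \cref{thm:async-sync-quantile} sees variance $\sigma^2/\batch_I$. The key observation is that the definition of $\batch_I$ enforces $\sigma^2/\batch_I \leq 2\sm\fbound/K_I$ in the non-convex case and $\sigma^2/\batch_I \leq 12\sm^2\diam^2/(K_I(K_I+1)^2)$ in the accelerated case, \emph{regardless of whether the $\max\set*{1,\cdot}$ activates}; in both sub-regimes the two sides of the $\max$ separately dominate $\sigma^2/\batch_I$. Consequently the stochastic error terms are controlled (up to constants) by the deterministic ones, collapsing the rates to $O(\sm\fbound/K_I)$ and $O(\sm\diam^2/K_I^2)$ respectively. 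Substituting the $K_I \geq K_J$ lower bound and using $\max(a,b) \leq a+b$ produces the claimed $O((1+\delayqq)\sm\fbound/(qT) + \sigma\sqrt{\sm\fbound/(qT)})$ and $O((1+\delayqq)^2\sm\diam^2/(qT)^2 + \sigma\diam/\sqrt{qT})$ bounds.

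The main obstacle I expect is the boundary case in which no epoch completes ($I=0$, $\wout=w_1$): there one must verify that epoch $1$ failing to finish forces $\delaymax = \Omega(T)$, so that the $q=1$ slice of the claimed infimum, scaling with $(1+2\delaymax)$ or $(1+2\delaymax)^2$, already dominates the trivial smoothness-based bounds $\norm{\nabla f(w_1)}^2 \leq 2\sm\fbound$ and $f(w_1)-f(w^\star) \leq \sm\diam^2/2$ on the initial point. A secondary bookkeeping obstacle is carefully tracking the constants through the ceilings in $\batch_i$, the floor in \cref{lem:template-updates}, and the factor-of-two loss from the doubling argument, so as to actually arrive at the stated constants $24$ and $192$, $72$.
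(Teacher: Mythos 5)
Your overall strategy and final bounds agree with the paper's, and the two correct insights---that the batch schedule $\batch_i$ automatically equalizes the stochastic error $\sigma^2/\batch_I$ with the deterministic error $\propto 1/K_I$ regardless of whether the $\max\set{1,\cdot}$ activates, and that one then case-splits on which term of $\batch_{I}+\delayqq$ dominates---are exactly the proof's engine. However, there are two genuine gaps in the reasoning you give.

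First, the per-epoch accounting ``epoch $i$ is guaranteed to complete within $T_i^* = O(K_i(\batch_i+\delayqq)/q)$ rounds'' is false as a per-epoch statement, so you cannot invoke \cref{lem:template-updates} on each epoch separately. The $q$-quantile $\delayqq$ is a \emph{global} statistic of the full $T$-round delay sequence; within a window of $T_i$ consecutive rounds there is no guarantee that a $q$-fraction of them have delay $\le \delayqq$ (the small-delay rounds could all be bunched in later epochs). The correct argument is the one in \cref{lem:sweep-updates}: count, across the entire run, the $\ge \ceil{qT}$ rounds with delay $\le \delayqq$, observe that every completed query in epoch $i$ can ``absorb'' at most $\batch_i + \delayqq$ such rounds, and conclude that if epoch $I+1$ was not finished then $qT < \sum_{i=1}^{I+1} K_i(\batch_i+\delayqq) \le 2(\batch_{I+1}+\delayqq)K_{I+1}$. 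Your conclusion $K_I \ge K_J$ is right, but it has to be reached by this global contrapositive, not by slicing the time axis into per-epoch budgets.

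Second, the boundary case $W=\emptyset$ is handled incorrectly. Epoch 1 failing to finish does \emph{not} force $\delaymax=\Omega(T)$: in \cref{alg:async-mini-batch-sweep} the first epoch's filter $t_1^{(1)}\le t-d_t$ is $1\le t-d_t$, which always holds, so every round contributes to the batch and epoch 1 fails only when $T<\batch_1$. Since $T\ge 1$, this forces $\batch_1>1$ and therefore $T<\sigma^2/(2\sm\fbound)$ (resp.\ $T<\sigma^2/(3\sm^2\diam^2)$). It is the \emph{stochastic} term of the claimed bound, $\sigma\sqrt{\sm\fbound}/\sqrt{qT}$ (resp.\ $\sigma\diam/\sqrt{qT}$), that then dominates the trivial smoothness bound $\norm{\nabla f(w_1)}^2 \le 2\sm\fbound$ (resp.\ $f(w_1)-f(w^\star)\le\sm\diam^2/2$), and this holds for every $q\in(0,1]$, not merely $q=1$. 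Attempting to cover this case via the deterministic $(1+2\delayqq)$-term and a $\delaymax=\Omega(T)$ claim would not close the argument.
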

\footnotetext{We omit the variance input parameter of $\alg$ in \cref{alg:async-mini-batch-sweep} since it is not used.}
Note that the infimum over quantiles in the statement above is at least as good as the median (or any other given quantile), improving upon \cref{cor:median} by automatically adapting to the best quantile without requiring any prior knowledge of a quantile upper bound. This is achieved by maximizing the batch size according to the underlying rate of $\alg$ instead of using the delay bound.
As discussed at length in \cref{sec:contributions,sec:comparison}, this adaptivity to quantile delays is more robust than existing average delay dependent results, and can further achieve accelerated guarantees for convex smooth objectives.

Next is the key lemma of the procedure which links the total number of rounds $T$ with the effective number of responses $\alg$ receives. It is essentially an extension of \cref{lem:template-updates} and follows the same proof technique.
For the proof see \cref{sec:quantile-proofs}.

\begin{lemma}\label{lem:sweep-updates}
    Let $\batch_1,\batch_2,\ldots$ be a non-decreasing sequence of positive integers and $K_i = 2^{i-1}$ for $i \in \naturals$. Running \cref{alg:async-mini-batch-sweep} for $T \in \naturals$ asynchronous steps, let $W=\brk{\wout_1,\ldots,\wout_I}$ be the outputs of $\alg$. Then assuming $W$ is not empty, for any $q \in (0,1]$,
    \begin{align*}
        q T < 2(\batch_{I+1} + \delayqq) K_{I+1}.
    \end{align*}
\end{lemma}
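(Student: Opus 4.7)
The plan is to extend the key counting argument of \cref{lem:template-updates} from a single run to the sequence of epochs produced by \cref{alg:async-mini-batch-sweep}. The intuition is that each query slot, regardless of which epoch it belongs to, can only absorb a bounded number of rounds with delay $\le \delayqq$ before filling its mini-batch and moving on; summing this across all slots in all epochs that have been entered gives an upper bound on $\ceil{qT}$, which is in turn bounded below by $qT$.

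First I would establish a per-slot bound identical to the one inside \cref{lem:template-updates}: for every epoch $i$ and every query slot $k$ (completed or in progress) of that epoch, the number of asynchronous rounds with $d_t \le \delayqq$ that fall inside the slot is at most $\batch_i + \delayqq$. The argument is exactly the one used already: ordering the in-slot rounds with delay $\le \delayqq$ as $a_1 < a_2 < \dots$, the round $a_j$ with $j \ge \delayqq+1$ satisfies $a_j - t_k^{(i)} \ge j-1 \ge \delayqq \ge d_{a_j}$ and hence passes the inner \texttt{if}; if the slot contained strictly more than $\batch_i + \delayqq$ such rounds, at least $\batch_i + 1$ would be accepted by the batch, contradicting the while-loop exit condition $b < \batch_i$.

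Next I would sum this per-slot bound across all epochs that contribute any rounds. Completed epochs $i = 1, \dots, I$ contribute at most $K_i(\batch_i + \delayqq)$ rounds with delay $\le \delayqq$ each. Epoch $I+1$ may have been entered but not completed; since it launches at most $K_{I+1}$ slots before the budget $T$ runs out, it contributes at most $K_{I+1}(\batch_{I+1} + \delayqq)$. Using monotonicity $\batch_i \le \batch_{I+1}$ and $K_i = 2^{i-1}$, the total is at most
\begin{align*}
(\batch_{I+1} + \delayqq)\sum_{i=1}^{I+1} 2^{i-1} = (\batch_{I+1} + \delayqq)(2^{I+1}-1) < 2 K_{I+1}(\batch_{I+1} + \delayqq).
\end{align*}

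Finally, by the definition of the $q$-quantile, at least $\ceil{qT} \ge qT$ of the $T$ asynchronous rounds satisfy $d_t \le \delayqq$, and all of these rounds occur inside some slot of some epoch (since $W \ne \varnothing$ ensures the algorithm actually executed rounds). Combining the lower bound $qT$ with the upper bound derived above yields $qT < 2(\batch_{I+1} + \delayqq)K_{I+1}$, as required. The only delicate point — and the part I would check carefully when writing out the proof — is that the per-slot bound remains valid for the in-progress slot of the unfinished epoch $I+1$, which is handled transparently by the contradiction-with-the-while-loop argument (a slot that exceeded the bound would have exited, and thus would have been a completed slot in a completed epoch).
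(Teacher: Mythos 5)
Your proof is correct and follows essentially the same route as the paper's: the same per-slot bound of $\batch_i + \delayqq$ rounds with delay $\le \delayqq$ (via the ordering-and-contradiction argument on the while loop), a sum of this bound over all slots of epochs $1$ through $I{+}1$ (including the possibly in-progress one), and the geometric-series bound $\sum_{i\le I+1}K_i < 2K_{I+1}$ combined with monotonicity of $\batch_i$. The only cosmetic difference is how the strict inequality is justified: you obtain it cleanly from $2^{I+1}-1 < 2^{I+1}$ in the geometric sum, whereas the paper phrases it as a contradiction ("otherwise $\wout_I$ would not be the last output"); both are valid and lead to the same bound.
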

Following is the proof of item \ref{thm:quantile-collection-non-convex} of \cref{thm:quantile-collection}; we defer the proof of other items to \cref{sec:quantile-proofs}.

\begin{proof}[Proof of \cref{thm:quantile-collection} \ref{thm:quantile-collection-non-convex}]%
    If $W$ is empty, then $T < \batch_1$ (as $T_1=1$ and the condition $1 = t_1^{(1)} \leq t-d_t$ is always satisfied). Hence, as $T \geq 1$, $\batch_1 > 1$ and
    $
        T < \frac{\sigma^2}{2 \sm \fbound},
    $
    and from smoothness,
    \begin{align*}
        \norm{\nabla f(\wout)}^2 = \norm{\nabla f(w_1)}^2 \leq 2 \sm \fbound
        < \frac{\sigma^2}{T} \leq \frac{\sigma^2}{q T}
    \end{align*}
    for all $q \in (0,1]$, where the first inequality follows by using a standard property of smooth functions, $\norm{\nabla f(w)}^2 \leq 2 \sm (f(w)-f^\star)$, which is established by the smoothness property between $w^+ = w-\frac{1}{\sm} \nabla f(w)$ and $w$,
    \begin{align*}
        f^\star-f(w) &\leq
        f(w^+) - f(w)
        \leq
        \nabla f(w) \cdot \brk*{w^+ - w} + \frac{\sm}{2} \norm*{w^+ - w}^2
        = -\frac{1}{2 \sm} \norm{\nabla f(w)}^2.
    \end{align*}
    We proceed to the case where $W$ is not empty.
    Let $q \in (0,1]$. By \cref{lem:sweep-updates},
        $q T < 2 (\batch_{I+1} + \delayqq) K_{I+1}.$
    If $\batch_{I+1} \leq \max\set{1,\delayqq}$,
    \begin{align*}
        q T \leq 2(1+2\delayqq) K_{I+1}
        \implies
        \frac{1}{K_I} \leq \frac{4(1+2\delayqq)}{q T}
        .
    \end{align*}
    If $\batch_{I+1} > \max\set{1,\delayqq}$,
    \begin{align*}
        q T \leq 4 B_{I+1} K_{I+1} \leq \frac{4\sigma^2 K_{I+1}^2}{\sm \fbound}
        \implies
        \frac{1}{K_I} \leq \frac{4 \sigma}{\sqrt{\sm \fbound q T}}
        .
    \end{align*}
    We are left with applying a standard convergence result for SGD, which is detailed in \cref{thm:sgd-non-convex} and establishes that
    \begin{align*}
        \E\brk[s]*{\norm*{\nabla f(\wout)}^2}
        &\leq
        \frac{2 \sm \fbound}{K}
        + \sqrt{\frac{8 \sigma^2 \sm \fbound}{K}}
        ,
    \end{align*}
    where $\wout$ is the output (a uniformly sampled iterate) of $K$-steps SGD with a fixed stepsize 
    \[\gamma=\min\set{\ifrac{1}{\sm},\sqrt{\ifrac{2 \fbound}{\sigma^2 \sm K}}}.\]
    To that end note that $\tilde g_k^{(I)}$ is the average of $\batch_I$ i.i.d. unbiased estimations of $\nabla f(\wt_k^{(I)})$, and by the linearity of the variance has variance bounded by $\sigma^2 / \batch_I$. Thus, our selection of parameters enables us to use \cref{thm:sgd-non-convex} and establish that
    \begin{align*}
        \E\brk[s]*{\norm{\nabla f(w_I)}^2}
        &\leq
        \frac{2 \sm \fbound}{K_I} + \sqrt{\frac{8 \sigma^2 \sm \fbound}{\batch_I K_I}}
        \leq
        \frac{6 \sm \fbound}{K_I}
        \leq
        \frac{24 (1+2\delayqq) \sm \fbound}{q T} + \frac{24 \sigma \sqrt{\sm \fbound}}{\sqrt{q T}}
        .
    \end{align*}
    As this holds for any $q \in (0,1]$,
    \begin{align*}
        \E\brk[s]*{\norm{\nabla f(w_I)}^2}
        &\leq
        \inf_{q \in (0,1]}
        \frac{24 (1+2\delayqq) \sm \fbound}{q T} + \frac{24 \sigma \sqrt{\sm \fbound}}{\sqrt{q T}}
        .
        \qedhere
    \end{align*}
\end{proof}

\section{Experimental Evaluation}

To illustrate the benefits of asynchronous mini-batching, we compare ``vanilla'' asynchronous SGD (denoted Async-SGD) with a practical variant of our mini-batch method (\cref{alg:async-mini-batch}), which uses SGD, denoted Async-MB-SGD, for training a fully connected neural network on the Fashion-MNIST classification dataset \citep{xiao2017fashion}.\footnote{We also experimented with filtering gradients whose delays exceed the number of machines, as proposed by \citet{koloskova2022sharper}, but observed performance similar to that of Async-SGD.} The dataset consists of 60,000 training images and 10,000 test images, each of size $28 \times 28$ pixels and labeled across 10 classes. We use test accuracy as the evaluation metric.

\begin{figure*}[ht]
    \centering
    \subfigure{
        \includegraphics[width=0.3\linewidth]{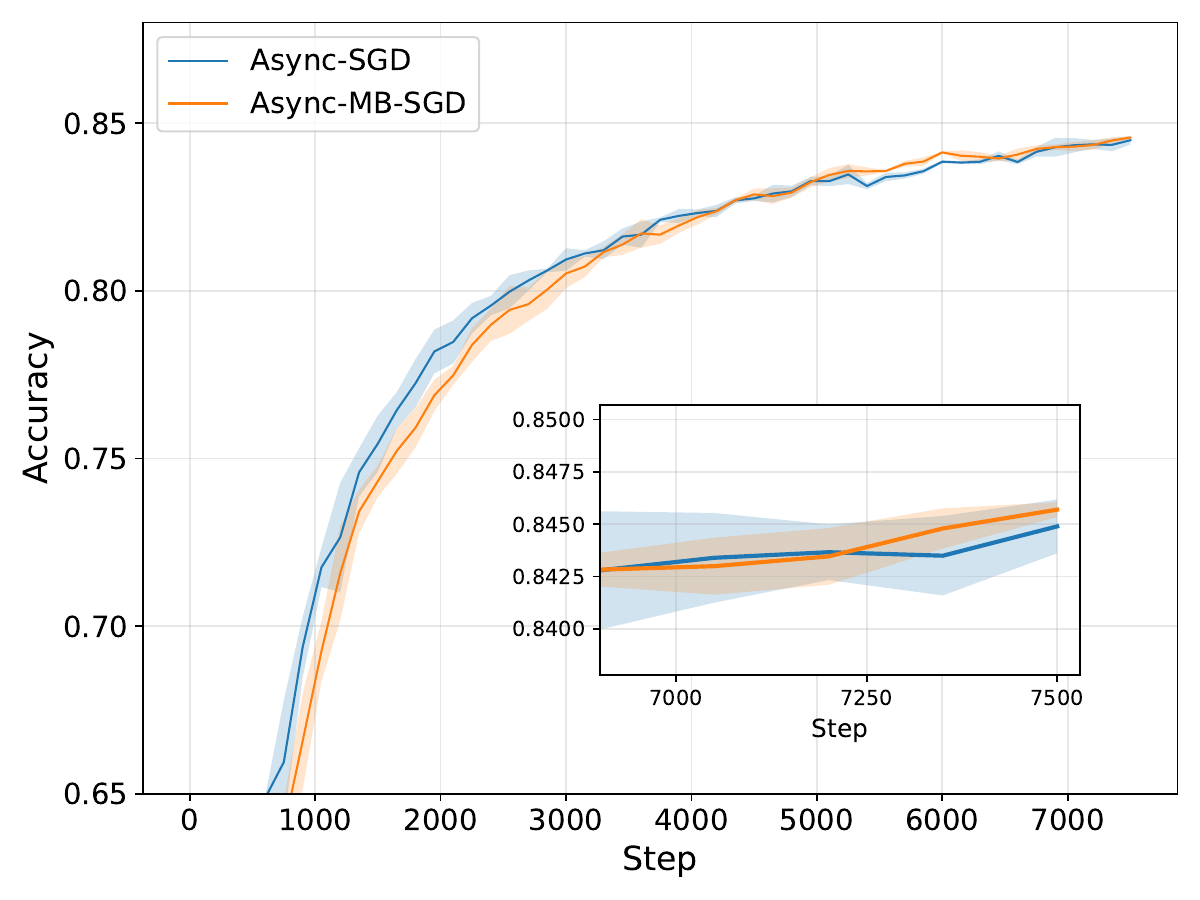}
        \label{fig:w40_t7500}
    }
    \hfill
    \subfigure{
        \includegraphics[width=0.3\linewidth]{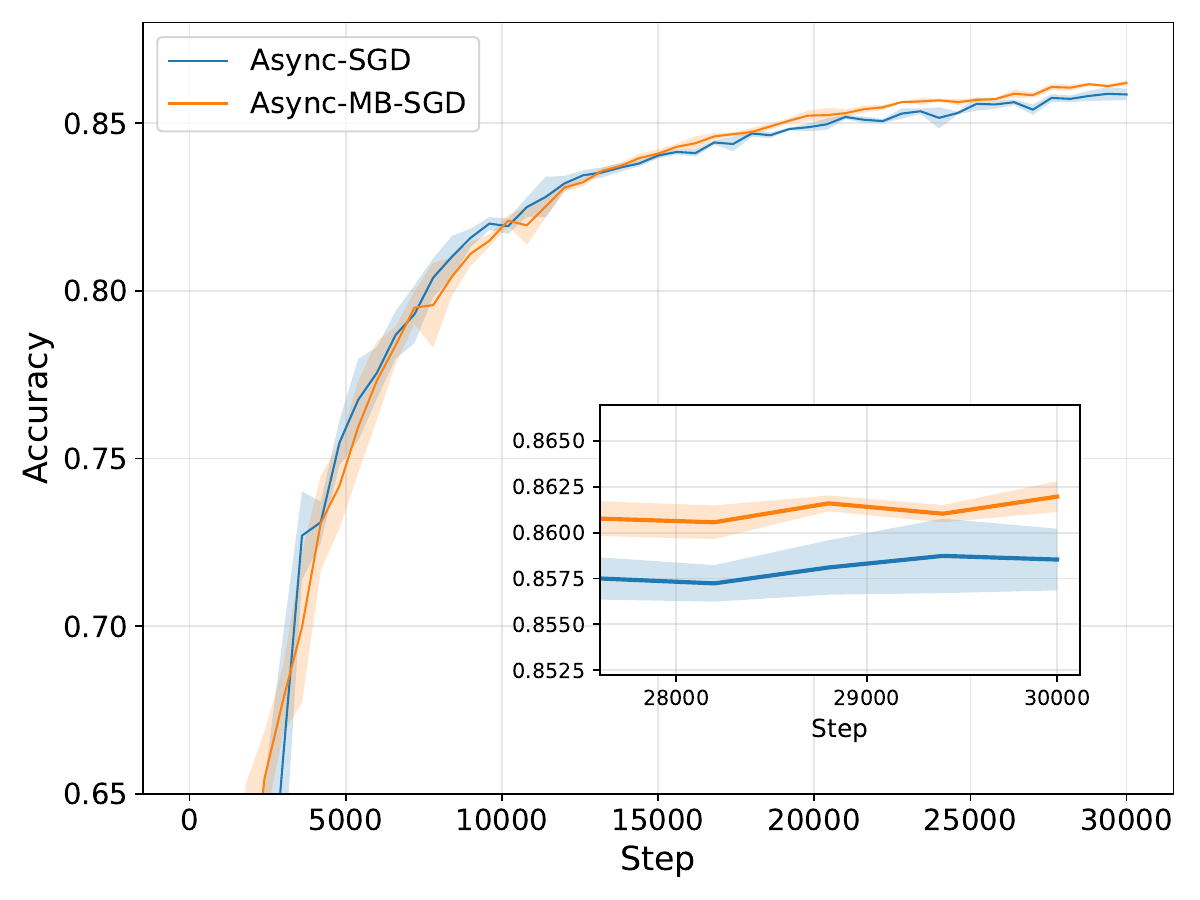}
        \label{fig:acc-160-30k}
    }
    \hfill
    \subfigure{
        \includegraphics[width=0.3\linewidth]{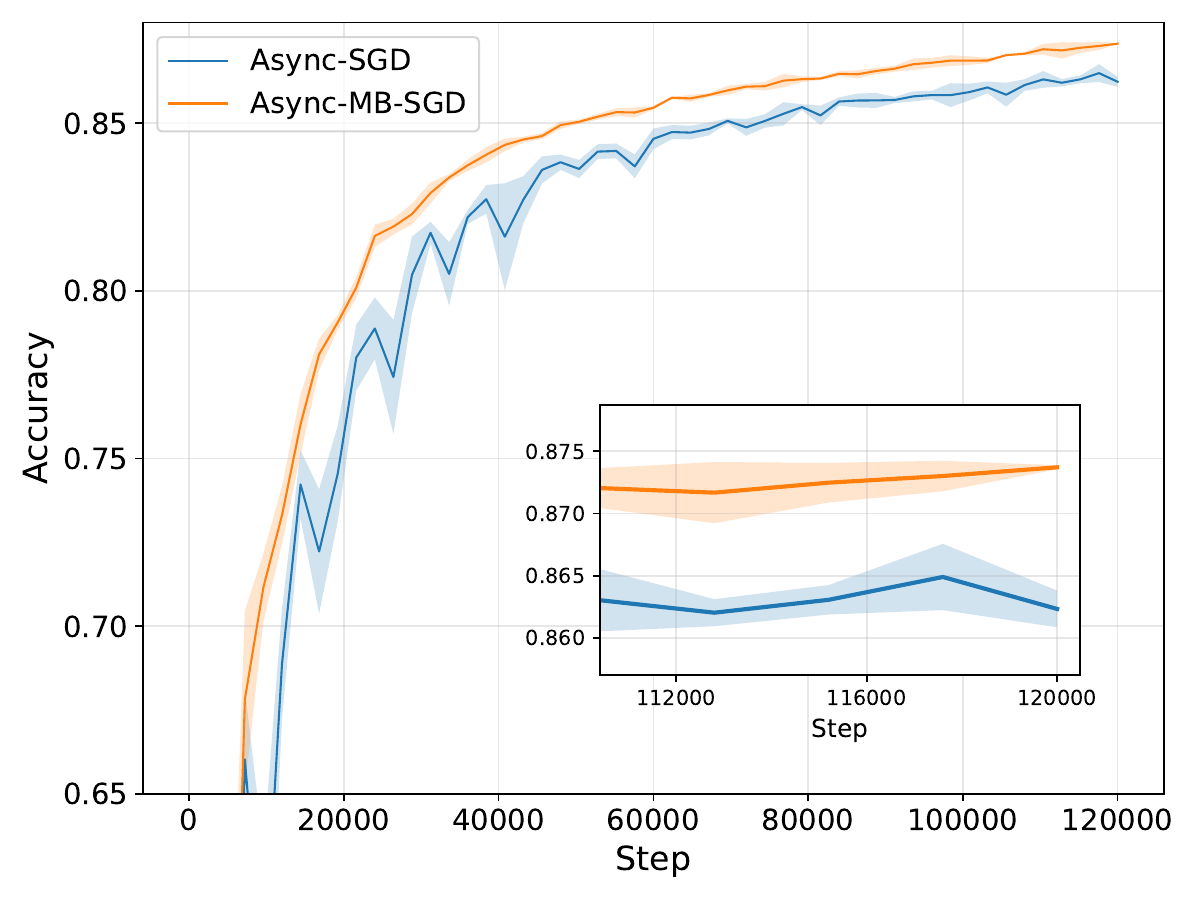}
        \label{fig:acc-640}
    }

    \vspace{-10pt}
    
    \caption{Comparison of Fashion-MNIST test accuracy for Async-SGD and Async-MB-SGD with varying numbers of workers and update steps. (left) 40 workers, 7,500 steps, (middle) 160 workers, 30,000 steps, (right) 640 workers, 120,000 steps. Each plot presents the mean and standard deviation across 3 runs.}
    \label{fig:test-accuracy}
\end{figure*}

\paragraph{Practical modifications.}
Consider a scenario with a constant delay, $d_t = \tau$, for all $t \in [T]$. When running \cref{alg:async-mini-batch} with batch size $\tau$, only one model update occurs every $2\tau-1$ gradient computations, and $\tau - 1$ of those gradients are discarded. Although the large amount of discarded gradients only affects the constant factors in the theoretical analysis, it is inefficient in practice.
To avoid discarding many gradient computations that are only slightly stale, we modify the condition ``\textbf{if} $t_k \leq t - d_t$ \textbf{then}'' in \cref{alg:async-mini-batch} to ``\textbf{if} $t_{\max\{k - 2, 1\}} \leq t - d_t$ \textbf{then}''.
This change allows the use of slightly older gradients, resulting in an effective delay of at most 2, while increasing the proportion of computations that contribute to model updates. Although this modification introduces a small amount of gradient staleness, the improved utilization of computed gradients is beneficial in practice. Our experiments report performance using this variant.

\paragraph{Asynchronous workers setup.}
We adopt the two-phase asynchronous simulation framework of \citet{cohen2021asynchronous}. In the first phase, we simulate compute times for each worker by drawing from a weighted mixture of two Poisson distributions. In the second phase, we simulate training by having each worker deliver gradients to a central server according to the generated compute schedule. Specifically, we follow schedule B of \citet{cohen2021asynchronous}, where each compute time is drawn from a Poisson distribution with parameter $P$ with probability 0.92 and from a Poisson distribution with parameter $150P$ with probability 0.08, using $P = 4.06$. To avoid compute times of 0, the Poisson distributions are shifted by $+1$.
In contrast to \citet{cohen2021asynchronous}, we sample a single compute time per update instead of separating the gradient computation and the gradient update.
We conduct experiments with 40, 160, and 640 workers, using 7,500, 30,000, and 120,000 update steps, respectively.  The linear relationship between the number of workers and update steps reflects the increase in total computation when scaling the number of workers over a fixed time budget.

\paragraph{Model and training setup.}
We train a three-layer fully connected neural network with input dimension $728=28^2$, hidden layers of sizes 256 and 128, an output layer of size 10, and ReLU activations. The model is trained using the cross-entropy loss.
To reduce the variation of the last iterate, we use exponential moving averaging with decay 0.99.
Each worker uses a local mini-batch of size 8.
The learning rate is selected separately for each algorithm from a geometric grid with multiplicative factor $\sqrt[3]{10}$: for Async-SGD we search over the range [0.001, 1.0], and for Async-MB-SGD over [0.01, 1.0].
For Async-MB-SGD, we additionally tune the aggregation batch size $\batch$ (i.e., the number of updates the server accumulates before modifying the model) over the set $\{1, 2, 4, 8, 16, 32\}$.
For the best set of hyperparameters, we report the mean and standard deviation across 3 runs.

\subsection{Results and Discussion}

\begin{table*}[t]
    \centering
    \caption{Fashion-MNIST test accuracies for Async-SGD and Async-MB-SGD across different numbers of workers and update steps.
    The test accuracy column reports mean $\!\pm\!$ standard deviation across 3 runs with different seeds.
    }
    \label{tab:my_label}
    \vspace{-5pt}
\begin{tabular}{ccccc}
    \sc num.\ workers & \sc num.\ steps & \sc method & \sc learning rate & \sc test accuracy \\
    \toprule
    \multirow{2}{*}{40} & \multirow{2}{*}{7{,}500} & Async-SGD & 0.021544 & $0.8449 \pm 0.0013$ \\
    & & Async-MB-SGD($B\!=\!2$) & 0.100000 & \textbf{0.8457} $\pm$ 0.0004 \\
    \midrule
    \multirow{2}{*}{160} & \multirow{2}{*}{30{,}000} & Async-SGD & 0.010000 & $0.8585 \pm 0.0017$ \\
    & & Async-MB-SGD($B\!=\!8$) & 0.215443 & \textbf{0.8620} $\pm$ 0.0008 \\
    \midrule
    \multirow{2}{*}{640} & \multirow{2}{*}{120{,}000} & Async-SGD & 0.002154 & $0.8623 \pm 0.0015$ \\
    & & Async-MB-SGD($B\!=\!8$) & 0.100000 & \textbf{0.8737} $\pm$ 0.0002 \\
    \bottomrule
\end{tabular}
\end{table*}

\cref{tab:my_label} reports test accuracy under various worker–step configurations. Async-SGD and Async-MB-SGD perform similarly with 40 workers, with a slight edge to Async-MB-SGD with an accuracy of 84.57\%. On the other hand, when the number of workers increases to 160 and 640, Async-MB-SGD consistently outperforms Async-SGD, with accuracy gaps of 0.35\% and 1.1\%. Notably, Async-MB-SGD uses a significantly larger stepsize, which aligns with expectations: the method experiences smaller effective delays due to gradient filtering and benefits from reduced variance via mini-batching.
\cref{fig:test-accuracy} presents test accuracy plots under different configurations. We observe smoother curves with Async-MB-SGD, particularly at larger worker counts. This behavior likely also results from the method’s ability to reduce the effective delay of gradient updates.

These results highlight the robustness of Async-MB-SGD to the scale of the number of workers. By mitigating the effects of delayed updates through gradient filtering and mini-batching, the method maintains strong performance and supports larger learning rates even under highly asynchronous conditions. This suggests that Async-MB-SGD can serve as a practical alternative to vanilla asynchronous SGD in large-scale distributed training settings.

\subsection*{Acknowledgements}
This project has received funding from the European Research Council (ERC) under the European Union’s Horizon 2020 research and innovation program (grant agreement No.\ 101078075).
Views and opinions expressed are however those of the author(s) only and do not necessarily reflect those of the European Union or the European Research Council. Neither the European Union nor the granting authority can be held responsible for them.
This work received additional support from the Israel Science Foundation (ISF, grant numbers 2549/19 and 3174/23), a grant from the Tel Aviv University Center for AI and Data Science (TAD), from the Len Blavatnik and the Blavatnik Family foundation, from the Prof.\ Amnon Shashua and Mrs.\ Anat Ramaty Shashua Foundation, and a fellowship from the Israeli Council for Higher Education.

\bibliography{references}
\bibliographystyle{icml2025}

\newpage
\appendix
\onecolumn

\section{Standard Convergence Bounds in Stochastic Optimization}
\label{sec:standard-convergence}

Convergence result of SGD for non-convex smooth optimization from \citet{lan2020first} (Corollary 6.1).

\begin{lemma}\label{thm:sgd-non-convex}
    Let $f : \reals^d \to \reals$ be a $\sm$-smooth function lower bounded by $f^\star$. Let $g : \reals^d \to \reals^d$ be an unbiased gradient oracle of $f$ with $\sigma^2$-bounded variance. Then running SGD \citep[termed RSGD in][]{lan2020first} for $K$ steps, initialized at $w_1 \in \reals^d$ and with stepsizes $\gamma_k = \min\set{\ifrac{1}{\sm},\sqrt{\ifrac{2 \fbound}{\sigma^2 \sm K}}}$, where $\fbound \geq f(w_1)-f^\star$, produce $\wout$ which satisfy
    \begin{align*}
        \E\brk[s]*{\norm*{\nabla f(\wout)}^2}
        &\leq
        \frac{2 \sm \fbound}{K}
        + \sqrt{\frac{8 \sigma^2 \sm \fbound}{K}}
        .
    \end{align*}
\end{lemma}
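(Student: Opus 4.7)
The plan is to follow the standard descent-lemma-then-telescope argument for SGD on non-convex smooth objectives, then tune the stepsize. Writing the update as $w_{k+1}=w_k-\gamma_k g_k$ with $\gamma_k=\gamma$ constant and $\gamma\le 1/\sm$, I would first apply the $\sm$-smoothness upper bound of $f$ between $w_k$ and $w_{k+1}$ and take conditional expectation with respect to the randomness in $g_k$. Using unbiasedness, $\E[g_k\mid w_k]=\nabla f(w_k)$, and the variance bound $\E[\norm{g_k-\nabla f(w_k)}^2\mid w_k]\le \sigma^2$, this yields the one-step descent inequality
\[
\E[f(w_{k+1})]\le \E[f(w_k)]-\gamma\brk*{1-\tfrac{\sm\gamma}{2}}\E[\norm{\nabla f(w_k)}^2]+\tfrac{\sm\gamma^2\sigma^2}{2}.
\]

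Second, since $\gamma\le 1/\sm$, the coefficient in front of $\E[\norm{\nabla f(w_k)}^2]$ is at least $\gamma/2$. Telescoping over $k=1,\dots,K$ and using $f(w_{K+1})\ge f^\star$ gives
\[
\frac{\gamma}{2}\sum_{k=1}^{K}\E[\norm{\nabla f(w_k)}^2]\le \fbound+\tfrac{\sm\gamma^2\sigma^2 K}{2}.
\]
Since the RSGD output $\wout$ is a uniformly random iterate from $\set{w_1,\dots,w_K}$, the left-hand side equals $(\gamma K/2)\E[\norm{\nabla f(\wout)}^2]$, so dividing through yields $\E[\norm{\nabla f(\wout)}^2]\le \frac{2\fbound}{\gamma K}+\sm\gamma\sigma^2$.

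Third, I would plug in $\gamma=\min\set{1/\sm,\sqrt{2\fbound/(\sigma^2\sm K)}}$. The bound $\gamma\le\sqrt{2\fbound/(\sigma^2\sm K)}$ makes the noise term $\sm\gamma\sigma^2$ at most $\sqrt{2\sm\sigma^2\fbound/K}$. For the leading term, a short case split on which branch of the minimum is active shows $\frac{2\fbound}{\gamma K}\le\max\set{\tfrac{2\sm\fbound}{K},\sqrt{\tfrac{2\sigma^2\sm\fbound}{K}}}\le\tfrac{2\sm\fbound}{K}+\sqrt{\tfrac{2\sigma^2\sm\fbound}{K}}$. Summing the two and combining the two identical square-root terms into $\sqrt{8\sigma^2\sm\fbound/K}$ recovers the claimed bound.

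There is no genuine obstacle here, as the argument is textbook; the only delicate moment is the stepsize tuning. What makes the final answer clean is precisely that the $1/\sm$ cap (which controls the deterministic/smoothness term) and the noise-optimal scale $\sqrt{\fbound/(\sigma^2\sm K)}$ enter additively rather than multiplicatively, giving the familiar ``deterministic $+$ stochastic'' two-term rate used throughout the paper.
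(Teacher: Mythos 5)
Your proof is correct and is the standard argument behind Lan's Corollary 6.1, which the paper cites for this lemma without reproving it: descent lemma, unbiasedness plus variance decomposition, telescoping with the $f^\star$ lower bound, uniformly random output iterate for RSGD, and the two-branch stepsize tuning that yields $\frac{2\fbound}{\gamma K}\le\frac{2\sm\fbound}{K}+\sqrt{2\sigma^2\sm\fbound/K}$ and $\sm\gamma\sigma^2\le\sqrt{2\sigma^2\sm\fbound/K}$, combining into $\sqrt{8\sigma^2\sm\fbound/K}$. All steps check out, including the constants.
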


Convergence result of SGD for convex smooth optimization from \citet{lan2020first} (Corollary 6.1).

\begin{lemma}\label{thm:sgd-smooth-convex}
    Let $f : \reals^d \to \reals$ be a $\sm$-smooth convex function admitting a minimizer $w^\star$. Let $g : \reals^d \to \reals^d$ be an unbiased gradient oracle of $f$ with $\sigma^2$ bounded variance. Then running SGD \citep[termed RSGD in][]{lan2020first} for $K$ steps, initialized at $w_1 \in \reals^d$ and with stepsizes $\gamma_k = \min\set{\ifrac{1}{\sm},\sqrt{\ifrac{\diam^2}{\sigma^2 K}}}$, where $\diam \geq \norm{w_1-w^\star}$, produce $\wout$ which satisfy
    \begin{align*}
        \E\brk[s]*{f(\wout)-f(w^\star)}
        &\leq
        \frac{\sm \diam^2}{K}
        + \frac{2 \sigma \diam}{\sqrt{K}}
        .
    \end{align*}
\end{lemma}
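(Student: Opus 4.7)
The plan is to carry out the classical potential-argument proof for SGD on smooth convex objectives, tracking $\Phi_k := \norm{w_k - w^\star}^2$. Expanding the update $w_{k+1} = w_k - \gamma_k g_k$ and taking conditional expectations (using unbiasedness of $g_k$ and the variance bound) gives
\[
    \E[\Phi_{k+1} \mid w_k]
    \leq \Phi_k - 2\gamma_k \langle \nabla f(w_k), w_k - w^\star\rangle + \gamma_k^2 \norm{\nabla f(w_k)}^2 + \gamma_k^2 \sigma^2.
\]
Convexity upper-bounds the cross-term by $-2\gamma_k (f(w_k) - f(w^\star))$, and the co-coercivity inequality for smooth convex $f$, namely $\norm{\nabla f(w_k)}^2 \leq 2\sm (f(w_k) - f(w^\star))$, absorbs the $\gamma_k^2 \norm{\nabla f}^2$ term into the same $f$-gap. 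This produces the one-step recursion
\[
    \E[\Phi_{k+1}] \leq \E[\Phi_k] - 2\gamma_k(1 - \sm\gamma_k)\, \E[f(w_k) - f(w^\star)] + \gamma_k^2 \sigma^2.
\]

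Next, I would specialize to the prescribed constant stepsize $\gamma = \min\{1/\sm, \sqrt{\diam^2/(\sigma^2 K)}\}$, so that the constraint $\sm\gamma \leq 1$ keeps the $f$-gap coefficient non-negative. Summing from $k=1$ to $K$ telescopes the potential (using $\Phi_1 \leq \diam^2$ and $\Phi_{K+1} \geq 0$), and either Jensen's inequality applied to the average iterate $\wout = \tfrac1K \sum_k w_k$ or the uniform-random selection rule of Lan's RSGD converts the bound on $\tfrac{1}{K}\sum_k\E[f(w_k) - f(w^\star)]$ into the bound
\[
    \E[f(\wout) - f(w^\star)] \leq \frac{\diam^2}{2\gamma(1 - \sm\gamma)K} + \frac{\gamma \sigma^2}{2(1-\sm\gamma)}.
\]

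Finally I would split into the two branches of the $\min$. When the square-root branch is active (the noise-dominated regime) the stepsize is sufficiently below $1/\sm$ that $1 - \sm\gamma \geq \tfrac12$, and a direct calculation balances $\diam^2/(\gamma K)$ against $\gamma\sigma^2$ to produce the $2\sigma\diam/\sqrt{K}$ term. When $\gamma = 1/\sm$ is active (the low-noise regime $\sigma^2 K \leq \sm^2\diam^2$), the $(1-\sm\gamma)$ coefficient degenerates, so I would fall back on the descent-lemma perspective $\E[f(w_{k+1})] \leq \E[f(w_k)] - \tfrac{\gamma}{2}\E\norm{\nabla f(w_k)}^2 + \tfrac{\sm\gamma^2\sigma^2}{2}$ combined with the distance potential in a convex combination, yielding the $\sm\diam^2/K$ term. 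Summing the two branch bounds gives the stated $\sm \diam^2 / K + 2\sigma\diam/\sqrt{K}$. The only delicate point is this constant-factor bookkeeping around the boundary $\gamma = 1/\sm$, which is the sole reason a single clean recursion does not cover both regimes; apart from this the proof is entirely standard, and the lemma is a direct restatement of Corollary 6.1 in \citet{lan2020first}.
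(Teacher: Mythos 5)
The paper never proves this lemma---it is imported verbatim as Corollary~6.1 of \citet{lan2020first}, consistent with the paper's black-box use of classical methods---so your derivation is being compared against the standard textbook argument rather than anything in the text. Your sketch is that standard distance-potential proof and is essentially correct, but the recursion you set up, with coefficient $2\gamma_k(1-\sm\gamma_k)$ on the function gap, degenerates precisely at the stepsize $\gamma=1/\sm$ that the lemma prescribes, which is what forces your two-branch patch; the fallback you describe for the low-noise branch (descent lemma combined with the distance potential ``in a convex combination'') can be made to work, but as written it is the vaguest step and the one place where the constants could slip. A cleaner route recovers the stated constants from a single recursion valid for every $\gamma\le 1/\sm$: split the cross term in half and use co-coercivity of smooth convex functions, $\langle\nabla f(w_k),w_k-w^\star\rangle\ge\frac{1}{\sm}\norm{\nabla f(w_k)}^2$ (since $\nabla f(w^\star)=0$), so that the $\gamma^2\norm{\nabla f(w_k)}^2$ term is absorbed and one gets $\E\norm{w_{k+1}-w^\star}^2\le\E\norm{w_k-w^\star}^2-\gamma\,\E\brk[s]*{f(w_k)-f(w^\star)}+\gamma^2\sigma^2$; summing, dividing by $\gamma K$, applying the random-iterate (or Jensen) step, and plugging in $\gamma=\min\set{1/\sm,\diam/(\sigma\sqrt{K})}$ yields exactly $\sm\diam^2/K+2\sigma\diam/\sqrt{K}$ with no case analysis inside the recursion, only in the trivial evaluation of $1/\gamma$.
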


Convergence result of accelerated SGD (AC-SA) for convex smooth optimization from \citet{lan2020first} (Proposition 4.4).

\begin{lemma}\label{thm:asgd-smooth-convex}
    Let $f : \reals^d \to \reals$ be a $\sm$-smooth convex function admitting a minimizer $w^\star$. Let $g : \reals^d \to \reals^d$ be an unbiased gradient oracle of $f$ with $\sigma^2$ bounded variance. Then running accelerated SGD \citep[termed AC-SA in][]{lan2020first} for $K$ steps, initialized at $w_1 \in \reals^d$ and with parameters $\alpha_t=\frac{2}{t+1}$ and $\gamma_t = \gamma t$ where
    \begin{align*}
        \gamma = \min\set*{\frac{1}{4\sm},\sqrt{\frac{3 \diam^2}{4 \sigma^2 K (K+1)^2}}}
    \end{align*}
    and $\diam \geq \norm{w_1-w^\star}$, produce $\wout$ which satisfy
    \begin{align*}
        \E\brk[s]*{f(\wout)-f(w^\star)}
        &\leq
        \frac{4 \sm \diam^2}{K(K+1)}
        + \frac{4 \sigma \diam}{\sqrt{3 K}}
        .
    \end{align*}
\end{lemma}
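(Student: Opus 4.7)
The plan is to prove this bound by analyzing AC-SA as a stochastic variant of Nesterov's accelerated method, via a potential-function (``estimating sequence'') argument. AC-SA maintains three coupled sequences---an extrapolated point $w_t^{\text{md}}=(1-\alpha_t)w_{t-1}^{\text{ag}}+\alpha_t w_{t-1}$, a gradient step $w_t=w_{t-1}-\gamma_t g(w_t^{\text{md}})$, and an averaged iterate $w_t^{\text{ag}}=(1-\alpha_t)w_{t-1}^{\text{ag}}+\alpha_t w_t$---with the useful identity $w_t^{\text{ag}}-w_t^{\text{md}}=-\alpha_t\gamma_t g(w_t^{\text{md}})$ tying the smoothness expansion at $w_t^{\text{ag}}$ to the distance recursion on $w_t$.

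First I would apply $\sm$-smoothness at $w_t^{\text{md}}$ to obtain $f(w_t^{\text{ag}})\le f(w_t^{\text{md}})+\langle\nabla f(w_t^{\text{md}}),w_t^{\text{ag}}-w_t^{\text{md}}\rangle+\tfrac{\sm}{2}\|w_t^{\text{ag}}-w_t^{\text{md}}\|^2$, and then convexity to write $f(w_t^{\text{md}})\le(1-\alpha_t)f(w_{t-1}^{\text{ag}})+\alpha_t f(w^\star)+\alpha_t\langle\nabla f(w_t^{\text{md}}),w_{t-1}-w^\star\rangle$. The inner product against $w_{t-1}-w^\star$ is then expanded via the three-point identity $\gamma_t\langle g(w_t^{\text{md}}),w_{t-1}-w^\star\rangle=\tfrac12\|w_{t-1}-w^\star\|^2-\tfrac12\|w_t-w^\star\|^2+\tfrac12\gamma_t^2\|g(w_t^{\text{md}})\|^2$, which couples the function-value recursion to the distance potential $\|w_t-w^\star\|^2$. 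The stochastic noise is controlled by splitting $g=\nabla f+(g-\nabla f)$: the bias term vanishes in conditional expectation since $w_{t-1}$ is measurable with respect to the past, while the variance term contributes at most $\sigma^2$ per step.

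The specific choices $\alpha_t=2/(t+1)$ and $\gamma_t=\gamma t$ are engineered so that, after dividing the per-step inequality by $\alpha_t\gamma_t$, the coefficient $(1-\alpha_t)/(\alpha_t\gamma_t)$ equals $1/(\alpha_{t-1}\gamma_{t-1})$; this is the telescoping structure underlying Nesterov-style acceleration. Summing from $t=1$ to $K$ and taking expectations yields, after some algebra, a bound of the form $\E[f(w_K^{\text{ag}})-f(w^\star)]\le\frac{c_1}{\gamma K(K+1)}\|w_1-w^\star\|^2+c_2\sigma^2\gamma(K+1)$ for universal constants $c_1,c_2$; the step-size cap $\gamma\le 1/(4\sm)$ is what lets the smoothness quadratic term $\tfrac{\sm\alpha_t^2\gamma_t^2}{2}\|g(w_t^{\text{md}})\|^2$ be absorbed into the descent side of the recursion.

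The main obstacle will be the bookkeeping: verifying that the telescoping coefficients line up exactly with the chosen $\alpha_t,\gamma_t$, and carefully isolating the martingale terms so that their expectation is zero. Once the recursion is in hand, substituting $\gamma=\min\{1/(4\sm),\sqrt{3\diam^2/(4\sigma^2 K(K+1)^2)}\}$ and upper-bounding the resulting expression by the sum of the two branches yields the stated rate $\tfrac{4\sm\diam^2}{K(K+1)}+\tfrac{4\sigma\diam}{\sqrt{3K}}$. The overall strategy is Nesterov's estimating-sequence construction combined with a standard martingale argument for the variance.
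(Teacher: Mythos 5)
The paper does not prove this lemma at all---it is stated as a direct citation to Proposition~4.4 of \citet{lan2020first}. Your sketch is essentially reconstructing that source's potential (estimate-sequence) argument, and the overall structure---smoothness at $w_t^{\text{md}}$, convexity split with weights $(1-\alpha_t,\alpha_t)$, the distance identity on $w_t$, martingale cancellation of the bias term, and absorption of the squared-gradient quadratic via the stepsize cap $\gamma\le 1/(4\sm)$---is the right one, so in that sense you are on the same route as the cited source.

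That said, the specific telescoping identity you claim is false. With $\alpha_t=2/(t+1)$ and $\gamma_t=\gamma t$, one has $(1-\alpha_t)/(\alpha_t\gamma_t)=(t-1)/(2\gamma t)$, whereas $1/(\alpha_{t-1}\gamma_{t-1})=t/(2\gamma(t-1)^2)$; already at $t=2$ these are $1/(4\gamma)$ and $1/\gamma$, so dividing the per-step inequality by $\alpha_t\gamma_t$ does not telescope. The correct normalizer is $\Gamma_t$ defined by $\Gamma_1=1$ and $\Gamma_t=(1-\alpha_t)\Gamma_{t-1}$, which with this $\alpha_t$ equals $\Gamma_t=2/(t(t+1))$. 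Dividing by $\Gamma_t$ rather than $\alpha_t\gamma_t$, the function-value term telescopes because $(1-\alpha_t)/\Gamma_t=1/\Gamma_{t-1}$, and the distance potential telescopes because the coefficient $\alpha_t/(\Gamma_t\gamma_t)=1/\gamma$ is constant in $t$---this constancy, not the identity you wrote, is the condition the parameter choice is engineered to satisfy. Once you swap in this normalization, the remainder of your plan (the $\sigma^2$-per-step variance bound, the $\gamma\le 1/(4\sm)$ cap to absorb $\tfrac{\sm}{2}\alpha_t^2\gamma_t^2\|g(w_t^{\text{md}})\|^2$, and the final substitution of the prescribed $\gamma$) does deliver the stated rate.
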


Convergence result of SGD for convex Lipschitz optimization from \citet{lan2020first} (Theorem 4.1, Equation 4.1.12).

\begin{lemma}\label{thm:sgd-convex-lipschitz}
    Let $\domain \subset \reals^d$ be a convex set with diameter bounded by $\diam$. Let $f : \domain \to \reals$ be a $\lip$-Lipschitz convex function and $w^\star \in \argmin_{w \in \domain} f(w)$. Let $g : \reals^d \to \reals^d$ be an unbiased sub-gradient oracle of $f$ with $\sigma^2$ bounded variance. Then running projected SGD \citep[termed stochastic mirror descent in][]{lan2020first} for $K$ steps, initialized at $w_1 \in \domain$ and with stepsizes $\gamma_t = \ifrac{\diam}{\sqrt{(\lip^2+\sigma^2)K}}$, produce $\wout$ which satisfy
    \begin{align*}
        \E\brk[s]*{f(\wout)-f(w^\star)}
        &\leq
        \frac{2 \diam \sqrt{\lip^2 + \sigma^2}}{\sqrt{K}}
        .
    \end{align*}
\end{lemma}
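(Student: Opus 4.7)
The statement is the classical stochastic projected subgradient bound, so my plan is to reproduce the standard one-step ``regret'' analysis and then apply Jensen's inequality plus the stated step-size choice.

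First I would write the update as $w_{t+1} = \proj{w_t - \gamma_t g_t}$ and use the fact that projection onto the convex set $\domain$ is non-expansive with respect to $w^\star \in \domain$. Expanding the square gives the one-step inequality
\begin{align*}
    \norm{w_{t+1} - w^\star}^2
    \leq \norm{w_t - w^\star}^2 - 2 \gamma_t\, g_t \dotp (w_t - w^\star) + \gamma_t^2 \norm{g_t}^2.
\end{align*}
Rearranging yields $g_t \dotp (w_t - w^\star) \leq \tfrac{1}{2\gamma_t}\brk*{\norm{w_t - w^\star}^2 - \norm{w_{t+1} - w^\star}^2} + \tfrac{\gamma_t}{2} \norm{g_t}^2$.

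Next I would take expectation conditionally on $w_t$. Unbiasedness gives $\E[g_t \mid w_t] \in \partial f(w_t)$, so by convexity of $f$, $\E[g_t \dotp (w_t - w^\star) \mid w_t] \geq f(w_t) - f(w^\star)$. For the noise term, the bound $\E[\norm{g_t}^2] \leq \lip^2 + \sigma^2$ follows from splitting $g_t = \nabla f(w_t) + (g_t - \nabla f(w_t))$ (or using the analogous bound for a subgradient) and applying the Lipschitz and variance assumptions. Summing over $t = 1,\ldots,K$ with the constant step size $\gamma_t \equiv \gamma = \diam/\sqrt{(\lip^2+\sigma^2)K}$, the telescoping term collapses to $\norm{w_1 - w^\star}^2 \leq \diam^2$, giving
\begin{align*}
    \sum_{t=1}^K \E\brk[s]*{f(w_t) - f(w^\star)}
    \leq \frac{\diam^2}{2\gamma} + \frac{\gamma K (\lip^2 + \sigma^2)}{2}.
\end{align*}

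Finally I would define the output as the average iterate $\wout = \tfrac{1}{K} \sum_t w_t$ (this is the averaging used by the cited stochastic mirror descent scheme), invoke Jensen's inequality to obtain $\E[f(\wout) - f(w^\star)] \leq \tfrac{1}{K} \sum_t \E[f(w_t) - f(w^\star)]$, and plug in the chosen $\gamma$ to balance the two terms, yielding a bound of order $\diam \sqrt{\lip^2+\sigma^2}/\sqrt{K}$ (the constant $2$ in the statement comes from using the slightly loose constants in Lan's presentation rather than the tightest balance). There is no real obstacle here: the only non-routine choice is identifying the averaging scheme of the output $\wout$ used in Theorem 4.1 of \citet{lan2020first} and tracking the constants through the step-size choice; the rest is textbook stochastic mirror descent analysis applied to the Euclidean distance-generating function.
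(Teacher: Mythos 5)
Your proof is correct; the paper does not actually prove this lemma but imports it from \citet{lan2020first} (Theorem 4.1), whose argument is exactly the stochastic mirror descent analysis you reproduce, specialized to the Euclidean prox with constant stepsize and averaged output. Note that your balance of terms in fact yields $\E[f(\wout)-f(w^\star)] \leq \diam\sqrt{\lip^2+\sigma^2}/\sqrt{K}$, i.e.\ constant $1$, so the stated factor of $2$ (inherited from Lan's looser constants) is safely covered.
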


\section{Average Delay is Bounded by Number of Machines}
\label{sec:avg-machines-bound}

In the arbitrary delay model, it was previously remarked \citep{koloskova2022sharper,feyzmahdavian2023asynchronous} that if the gradients are produced by a constant number of machines (the setting \citet{koloskova2022sharper,mishchenko2022asynchronous} considered), the average delay is smaller than the number of machines.
For completeness we state and prove this property below.

\begin{lemma}\label{lem:avg-machines-bound}
    Let $d_1,\ldots,d_T$ be an arbitrary delay sequence produced by $\machines$ machines. Then the average delay is lower bounded by the number of machines. In particular, $\frac{1}{T} \sum_{t=1}^T d_t \leq \machines-1$.
\end{lemma}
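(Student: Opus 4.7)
The plan is to account for the delays machine by machine via a simple telescoping argument, leveraging the fact that any individual machine can only start a new gradient computation after its previous one has been delivered and the server's iterate has been updated.

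First, I would fix notation. For each machine $m \in [\machines]$, let $t_1^{(m)} < t_2^{(m)} < \cdots < t_{n_m}^{(m)}$ denote the rounds at which it delivers gradients, and let $s_k^{(m)}$ be the round-index of the iterate $w_{s_k^{(m)}}$ on which its $k$-th delivered gradient was computed, so that $d_{t_k^{(m)}} = t_k^{(m)} - s_k^{(m)}$. Since every machine initially picks up the starting iterate $w_1$, we have $s_1^{(m)} \geq 1$; and since after delivering at round $t_k^{(m)}$ the server first updates its state and only then exposes the next iterate $w_{t_k^{(m)}+1}$ for the machine to pick up, the next pickup satisfies $s_{k+1}^{(m)} \geq t_k^{(m)} + 1$.

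Next, I would telescope the per-machine delay sum. For any $m$ with $n_m \geq 1$, the constraints above give
\begin{align*}
    \sum_{k=1}^{n_m} d_{t_k^{(m)}}
    = \sum_{k=1}^{n_m} t_k^{(m)} - \sum_{k=1}^{n_m} s_k^{(m)}
    \leq \sum_{k=1}^{n_m} t_k^{(m)} - 1 - \sum_{k=1}^{n_m-1}(t_k^{(m)} + 1)
    = t_{n_m}^{(m)} - n_m
    \leq T - n_m,
\end{align*}
using $t_{n_m}^{(m)} \leq T$ in the last step; the bound $\sum_k d_{t_k^{(m)}} \leq T - n_m$ also holds trivially when $n_m = 0$.

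Finally, I would sum over machines. Since every one of the $T$ rounds corresponds to exactly one machine's delivery, $\sum_{m=1}^{\machines} n_m = T$, so
\begin{align*}
    \sum_{t=1}^{T} d_t
    = \sum_{m=1}^{\machines} \sum_{k=1}^{n_m} d_{t_k^{(m)}}
    \leq \sum_{m=1}^{\machines}(T - n_m)
    = \machines T - T
    = (\machines - 1)\, T,
\end{align*}
and dividing by $T$ yields the claim. The only real ``obstacle'' is modeling rather than technical: one has to pin down what ``produced by $\machines$ machines'' operationally means in order to justify the constraint $s_{k+1}^{(m)} \geq t_k^{(m)} + 1$ (i.e., that after a delivery, the earliest iterate the same machine can be working on is the post-update one). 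Once that is in place the inequality reduces to the two-line telescoping calculation above.
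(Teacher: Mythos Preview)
Your proof is correct and follows essentially the same per-machine telescoping idea as the paper's argument. The only minor difference is in the final accounting: the paper obtains (under the exact-pickup model $s_{k+1}^{(m)}=t_k^{(m)}+1$) the identity $\sum_t d_t = -T + \sum_m t_{n_m}^{(m)}$ and then bounds $\sum_m t_{n_m}^{(m)}$ using that the last-delivery times are distinct integers in $[T]$, whereas you bound each $t_{n_m}^{(m)} \leq T$ individually and sum $\sum_m(T-n_m)=(\machines-1)T$ directly; your route is slightly more streamlined and also accommodates the weaker assumption $s_{k+1}^{(m)} \geq t_k^{(m)}+1$.
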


\begin{proof}%
For any $m \in [\machines]$, let $S_m=\set{k_{m,1},k_{m,2},\ldots,k_{m,n(m)}}$ be the rounds where the gradient was computed by machine $m$, where $n(k)$ is the total number of gradients produced by machine $m$. Rearranging the summation of delays and treating $k_{m,0}=0$,
\begin{align*}
    \sum_{t=1}^T d_t
    &= \sum_{m \in [\machines]} \sum_{i=1}^{n(m)} k_{m,i}-k_{m,i-1}-1
    = -T + \sum_{m \in [\machines]} \sum_{i=1}^{n(m)} k_{m,i}-k_{m,i-1}
    \\&= -T + \sum_{m \in [\machines]} k_{m,n(m)}.
\end{align*}
As $k_{m,n(m)} \neq k_{m',n(m')}$ if $m \neq m'$ (as each step has only a single gradient),
\begin{align*}
    \sum_{t=1}^T d_t
    &= -T + \sum_{m \in [\machines]} k_{m,n(m)}
    \leq -T + \sum_{t=T-\machines+1}^{T} t
    = -T + \frac{\machines(T-\machines+1+T)}{2}
    \\
    &= \frac{2 \machines T - \machines^2 + \machines - 2 T}{2}
    \leq T(\machines - 1).
\end{align*}
Dividing both sides by $T$ we conclude $\frac{1}{T} \sum_{t=1}^T d_t \leq \machines-1$.
\end{proof}

\section{Best Quantile Bound Arbitrarily Improves over Average and Median}
\label{sec:avg-optimality}

As we establish in \cref{thm:async-sync-quantile,cor:median}, given a bound of the median delay (or any other quantile), it is possible to convert classical optimization methods to asynchronous optimization methods in a black-box manner.
Note that by a simple application of Markov's inequality, the median delay is bounded by twice the average delay, as $\delaymed \leq \delayavg / \Pr(d_t \geq \delaymed) \leq 2 \delayavg$, where $t$ is chosen uniformly at random.  On the other hand, for the delay sequence $d_t = \ind{t > T/2+1} (t-1)$, we have $\delayavg=\Omega(T)$ and a significantly smaller $\delaymed=0$.

A natural question is whether we can improve beyond the average or median delay dependency. To gain intuition consider the following simple delay sequence, which is produced by performing asynchronous optimization with $\machines$ machines where one machine is $n$ times as fast for $T=n+\machines-1$ steps,
\begin{align*}
    d_t
    &=
    \begin{cases}
        0 & \text{if $t \leq n$;} \\
        t-1 & \text{otherwise.}
    \end{cases}
\end{align*}
In the $\sm$-smooth non-convex case, assuming $\sigma=0$, it is straightforward to see that the best approach is to perform SGD and ignore all stale gradients, as they do not provide additional information, achieving a convergence rate of
\begin{align*}
    O\brk*{\frac{\sm (f(w_1)-f^\star)}{n}} = O\brk*{\frac{\sm (f(w_1)-f^\star)}{T \cdot \frac{n}{n+\machines-1}}} = O\brk*{\frac{\sm (f(w_1)-f^\star)}{q T}},
\end{align*}
where $q = \frac{n+\machines-1}{n}$ is the maximal quantile of the gradients with zero delay. In case $q < \frac12$, the median and average delays will be $\Omega(n)$, and dependence on them will lead to far worse guarantees. To that end, we would like a result more robust to the delay distribution which better ignores outliers.

\section{Lower Bound for ``Vanilla'' Fixed Stepsize Asynchronous \texorpdfstring{\\}{} SGD}
\label{sec:async-sgd-max-lower-bound}

The following theorem shows that, without further assumptions or modifications to the algorithm, the performance of vanilla (non-adaptive) asynchronous SGD with a fixed stepsize must degrade according to the \emph{maximal delay}, similarly to the upper bound of \citet{stich2020error}.
In particular, assuming $\delaymax = o(T)$, the delay sequence defined in \cref{thm:async-sgd-max-lower-bound} satisfies $\delayavg=o(\delaymax)$.
The result may seem inconsistent with the upper bound of asynchronous SGD from \citet{koloskova2022sharper}, which degrades as $\sqrt{\delayavg \delaymax}$. However, this is not the case, as their definition of average delay includes ``imaginary'' delays from machines that return results after round $T$, rather than accounting solely for the observed sequence.

\begin{theorem}\label{thm:async-sgd-max-lower-bound}
    For any $T \in \naturals$, $\delaymax \in [T-2]$ and $w_1 \in \reals$, there exists a delay sequence of length $T$ with maximal delay $\delaymax$ and average delay bounded by $\delaymax^2/T$, which can be produced by $\delaymax+1$ machines, and a $\sm$-smooth convex function $f : \reals \to \reals$ admitting a minimizer $w^\star$, such that for any $\eta  > \frac{6}{\sm (1+\delaymax)}$, the iterates of $T$-steps asynchronous SGD with the delay sequence and deterministic gradients, initialized at $w_1$ with stepsize $\eta$, satisfy
    \begin{align*}
        \frac1T \sum_{t=1}^T \norm{\nabla f(w_t)}^2 \geq \frac{4 (1+\delaymax) \sm (f(w_1)-f(w^\star))}{T}
    \end{align*}
    and
    \begin{align*}
        \frac1T \sum_{t=1}^T f(w_t)-f(w^\star) \geq \frac{ (1+\delaymax) \sm \norm{w_1-w^\star}^2}{T}.
    \end{align*}
\end{theorem}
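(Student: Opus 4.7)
The plan is to exhibit an explicit worst case. I would take $f(w) = \frac{\sm}{2}(w - w^\star)^2$ (a convex, $\sm$-smooth one-dimensional quadratic) with $w^\star = 0$ and arbitrary $w_1 \neq 0$, together with the delay sequence
\begin{align*}
    d_t = \begin{cases} t - 1, & 1 \le t \le \delaymax + 1, \\ 0, & \delaymax + 2 \le t \le T. \end{cases}
\end{align*}
The maximum delay is exactly $\delaymax$, the sum of delays equals $\delaymax(\delaymax+1)/2 \le \delaymax^2$, so the average is bounded by $\delaymax^2/T$. The sequence is realizable by $\delaymax + 1$ machines: all of them begin computing $\nabla f(w_1)$ at time zero and deliver in staggered fashion at rounds $1, \ldots, \delaymax + 1$; from that point on, whichever worker delivered last continues to return gradients in unit compute time (producing delay $0$ every round), while the remaining $\delaymax$ workers hang beyond the horizon $T$ and do not contribute further.

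The dynamics then linearize. Since every update during the first $\delaymax + 1$ rounds uses the same gradient $\nabla f(w_1) = \sm w_1$, induction gives $w_t = w_1\bigl(1 - (t-1)\eta\sm\bigr)$ for $t = 1, \ldots, \delaymax + 2$. Writing $x \eqdef (\delaymax + 1)\eta\sm$, the hypothesis $\eta > 6/(\sm(1+\delaymax))$ reads $x > 6$, and the key calculation is to lower bound
\begin{align*}
    \sum_{t=1}^{\delaymax + 2} w_t^2 = w_1^2 \sum_{j=0}^{\delaymax + 1}\Bigl(1 - \tfrac{j}{\delaymax+1}\, x\Bigr)^{2}.
\end{align*}
Expanding using the standard closed forms for $\sum j$ and $\sum j^2$ gives $(\delaymax + 2)\bigl[1 - x + \tfrac{2\delaymax+3}{6(\delaymax+1)}\, x^2\bigr]$; combined with $(2\delaymax+3)/(6(\delaymax+1)) \ge 1/3$ and the fact that $u \mapsto 1 - u + u^2/3$ is increasing on $[3/2, \infty)$ with value $7$ at $u = 6$, this yields $\sum_{t=1}^{\delaymax + 2} w_t^2 \ge 7(\delaymax + 2)\, w_1^2$ whenever $x > 6$.

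Both displayed inequalities then drop out of the quadratic identities $f(w) - f(w^\star) = \frac{\sm}{2} w^2$ and $\norm{\nabla f(w)}^2 = 2\sm(f(w) - f(w^\star))$: dividing the bound above by $T$ and discarding the nonnegative contributions from $t > \delaymax + 2$ produces constants $14$ and $7/2$, comfortably above the claimed $4$ and $1$. The most delicate part of the argument is verifying producibility: one must describe the compute-time schedules of the $\delaymax + 1$ machines explicitly and check that the round-by-round deliveries match the prescribed pattern---in particular, that exactly one worker can remain ``in phase'' with the server after the initial burst to furnish the delay-zero gradients, while the other $\delaymax$ workers are scheduled to hang past the horizon. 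Once this is in place, the remainder is an elementary calculation on a linear recurrence.
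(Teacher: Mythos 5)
Your construction is identical to the paper's: the same one-dimensional quadratic $f(w)=\frac{\sm}{2}w^2$, the same delay sequence (delay $t-1$ for the first $\delaymax+1$ rounds, then $0$), the same linear trajectory $w_t = w_1(1-(t-1)\eta\sm)$ on the initial burst, and the same realizability argument with $\delaymax+1$ machines. The only technical divergence is how the quadratic sum is lower-bounded. The paper keeps only the terms $t \in \{\delaymax/2+2,\ldots,\delaymax+2\}$, observes $\eta\sm(t-1)\geq 3$ there, bounds each by $4$, and must awkwardly ``treat $\delaymax$ as even'' to count these terms. You instead evaluate $\sum_{j=0}^{\delaymax+1}(1-\tfrac{j}{\delaymax+1}x)^2$ exactly via $\sum j$ and $\sum j^2$ and minimize the resulting quadratic in $x$ over $x>6$, which is arguably cleaner, avoids the parity caveat entirely, and delivers better constants ($14$ and $7/2$ versus the paper's $4$ and $1$). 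I verified the algebra: the closed form is $(\delaymax+2)\bigl[1-x+\tfrac{2\delaymax+3}{6(\delaymax+1)}x^2\bigr]$, the coefficient is $\geq 1/3$, and $1-u+u^2/3$ is increasing on $[3/2,\infty)$ with value $7$ at $u=6$, so your bound holds. Everything checks out; this is a correct proof taking essentially the paper's route with a modestly tightened sum estimate.
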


Note that the above theorem holds for $\eta > \ifrac{6}{\sm (1+\delaymax)}$. For a smaller value of $\eta$, the worst-case convergence guarantee of standard SGD (even without noise or delays) is $\Omega((1+\delaymax)\sm/T)$, which is also tight in the worst case. We cover this case in \cref{thm:async-gd-lower-bound-small}.

\begin{proof}[Proof of \cref{thm:async-sgd-max-lower-bound}]
    Let $f(w)=\frac{\sm}{2} \norm{w}^2$ which is $\sm$-smooth and convex function admitting a minimizer $w^\star=0$. We consider the following delay sequence,
    \begin{align*}
        d_t
        &=
        \begin{cases}
            t-1 & \text{if $t \leq \delaymax+1$}; \\
            0 & \text{otherwise}.
        \end{cases}
    \end{align*}
    Note that the maximal delay is $\delaymax$ and
    \begin{align*}
        \delayavg
        &=
        \frac{1}{T} \sum_{t=1}^{\delaymax+1} t-1
        =
        \frac{(\delaymax+1) \delaymax }{2 T}
        \leq \frac{\delaymax^2}{T}
        .
    \end{align*}
    In addition, this sequence can be made with $\delaymax+1$ machines, where the first $\delaymax+1$ rounds are produced by different machines and the rest are produced by the last.
    The trajectory of the first $\delaymax+2$ iterates can be written as
    \begin{align*}
        w_t
        &=
        w_1 - \eta (t-1) \nabla f(w_1)
        = w_1 (1 - \eta \sm (t-1))
        .
    \end{align*}
    Thus, the average squared gradient norm can be bounded by
    \begin{align*}
        \frac{1}{T} \sum_{t=1}^{T} \norm{\nabla f(w_t)}^2
        &\geq
        \frac{\sm^2}{T} \!\sum_{t=\delaymax/2+2}^{\delaymax+2} \!\! \brk{1-\eta \sm (t-1)}^2 \norm{w_1}^2
        .
    \end{align*}
    Note that we treat $\delaymax$ as an even number, this is done for simplicity and the odd case affects only a slight constant modification as $\delaymax+1 \leq 2\delaymax$. As $\eta > \frac{6}{\sm (\delaymax+1)}$,
    for $t \geq \delaymax/2+2$,
    $
        \eta \sm (t-1) \geq 3
    $
    and
    \begin{align*}
        \frac{1}{T} \sum_{t=1}^{T} \norm{\nabla f(w_t)}^2
        &\geq
        \frac{\sm^2 \norm{w_1}^2}{T} \sum_{t=\delaymax/2+2}^{\delaymax+2} 4
        \\&
        \geq \frac{2 \sm^2 \norm{w_1}^2 (\delaymax+1)}{T}
        .
    \end{align*}
    By a standard application of smoothness, $f(w_1)-f(w^\star) \leq \frac{\sm}{2}\norm{w_1-w^\star}^2$. Hence,
    \begin{align*}
        \frac{1}{T} \sum_{t=1}^{T} \norm{\nabla f(w_t)}^2
        &\geq
        \frac{4 \sm (f(w_1)-f(w^\star)) (\delaymax+1)}{T}
        .
    \end{align*}
    And for the second inequality, as $f(w_t)-f(w^\star)=\frac{\sm}{2}\norm{w_t}^2=\frac{1}{2 \sm}\norm{\nabla f(w_t)}^2$,
    \begin{align*}
        \frac{1}{T} \sum_{t=1}^T f(w_t)-f^\star
        &\geq
        \frac{1}{2 \sm T} \sum_{t=1}^{T} \norm{\nabla f(w_t)}^2
        \\&
        \geq
        \frac{\sm \norm{w_1-w^\star}^2 (\delaymax+1)}{T}
        .
        \qedhere
    \end{align*}
\end{proof}

\subsection{Lower Bound for ``Vanilla'' Asynchronous SGD with Small Fixed Stepsize}
\label{thm:async-gd-lower-bound-small}

In \cref{thm:async-sgd-max-lower-bound} we provided a lower bound for $\sm$-smooth optimization using fixed stepsize asynchronous SGD with stepsize $\eta > \frac{6}{\sm(1+\delaymax)}$, where $\delaymax$ is the maximal delay of the arbitrary delay sequence.
Next, we complement this result with a simple lower bound for any delay sequence (which holds in particular for the centralized case by setting $d_t=0$ for all $t \in [T]$), that has an inverse dependence on the stepsize, and yields for $\eta \leq \frac{6}{\sm(1+\delaymax)}$ the same lower bound of \cref{thm:async-sgd-max-lower-bound} (up to constant factors).

\begin{theorem}\label{thm:async-sgd-max-lower-bound-small}
    For any $T \in \naturals$, $w_1 \in \reals$, $\sm > 0$ and $\eta > 0$, there exists a $\sm$-smooth convex function $f : \reals \to \reals$ admitting a minimizer $w^\star \in [-1,1]$, such that for any delay sequence $d_1,\ldots,d_T$, the iterates of $T$-steps asynchronous (S)GD with the delay sequence and deterministic gradients, initialized at $w_1$ and with stepsize $\eta$, satisfy
    \begin{align*}
        \frac1T \sum_{t=1}^T \norm{\nabla f(w_t)}^2
        \geq
        \frac{\sm (f(w_1)-f(w^\star))}{2\max\set{1,2 \sm \eta T}}
    \end{align*}
    and
    \begin{align*}
        \frac1T \sum_{t=1}^T f(w_t)-f(w^\star)
        \geq \frac{\sm \norm{w_1-w^\star}^2}{8\max\set{1,2\sm \eta T}}.
    \end{align*}
\end{theorem}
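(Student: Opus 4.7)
My plan is to take $f(w)=\tfrac{\sm}{2}w^2$, which is $\sm$-smooth and convex with minimizer $w^\star=0\in[-1,1]$, and to prove only the gradient-norm inequality; the function-value inequality then follows from the pointwise identity $\norm{\nabla f(w)}^2=\sm^2 w^2 = 2\sm(f(w)-f^\star)$ together with $f(w_1)-f^\star=\tfrac{\sm}{2}\norm{w_1-w^\star}^2$, which converts the first bound into the second with exactly the stated $1/8$ constant. For this choice both inequalities reduce to proving
\[
\sum_{t=1}^T w_t^2 \;\ge\; \frac{T w_1^2}{4\max\{1,\,2\sm\eta T\}}
\]
for every delay sequence, where the iterates obey the linear recursion $w_{t+1}=w_t-\eta\sm\, w_{t-d_t}$ with the usual convention $w_s=w_1$ for $s\le 1$.

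The plan is to split on the value of $\max\{1,2\sm\eta T\}$. In the small-stepsize regime $2\sm\eta T\le 1$, I would show that the iterates stay bounded away from zero by first establishing the upper bound $\max_{s\le t}|w_s|\le(1+\eta\sm)^{t-1}|w_1|\le e^{1/2}|w_1|$ through a triangle-inequality induction, and then feeding this upper bound back into the reverse triangle inequality $|w_{t+1}|\ge |w_t|-\eta\sm\max_{s\le t}|w_s|$ and unrolling to get $|w_t|\ge|w_1|/2$ for all $t\in[T]$ (tightening the absolute constants as needed). Summing $w_t^2\ge w_1^2/4$ over $t\in[T]$ matches the required bound.

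In the large-stepsize regime $2\sm\eta T>1$, the centralized case $d_t\equiv 0$ is by direct computation: $w_t=(1-\eta\sm)^{t-1}w_1$ together with the geometric identity $\sum_{t=1}^T(1-\eta\sm)^{2(t-1)}=(1-(1-\eta\sm)^{2T})/(\eta\sm(2-\eta\sm))$ and the estimate $(1-\eta\sm)^{2T}\le e^{-2\eta\sm T}\le e^{-1}$ (when $\eta\sm\le 1$) yield $\sum w_t^2\ge w_1^2/(4\eta\sm)$, comfortably beating the required $w_1^2/(8\eta\sm)$; the range $\eta\sm\ge 1$ is trivial since then $w_1^2$ alone already suffices. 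For a general delay sequence I would start from the squared-telescoping identity $w_{T+1}^2-w_1^2=-2\eta\sm\sum_t w_t w_{t-d_t}+\eta^2\sm^2\sum_t w_{t-d_t}^2$ and apply Young's inequality $2|w_t w_{t-d_t}|\le w_t^2+w_{t-d_t}^2$ to obtain a relation between $\sum w_t^2$, $\sum w_{t-d_t}^2$, and $w_1^2-w_{T+1}^2$, which after a subcase on whether $|w_{T+1}|\ge |w_1|/2$ reduces the problem to lower-bounding $\sum w_t^2$ in terms of $\sum w_{t-d_t}^2$.

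The main technical obstacle I foresee lies precisely in this last reduction: relating $\sum_t w_{t-d_t}^2=\sum_s m_s w_s^2$ (with $m_s=|\{t:t-d_t=s\}|$ and $\sum_s m_s=T$) back to $\sum_s w_s^2$ under arbitrary delays. Under adversarial choices a single iterate can appear up to $T$ times (for instance $d_t=t-1$ forces $t-d_t=1$ for every $t$), so the naive bound $\sum w_{t-d_t}^2\le(\max_s m_s)\sum_s w_s^2$ is too loose. To overcome this I would split on $m_{\max}=\max_s m_s$: when $m_{\max}$ is bounded the Cauchy--Schwarz-based reduction transfers directly, while when $m_{\max}$ is large the linear dynamics repeatedly push the iterates in the same direction, producing linear growth or oscillation in the subsequent $w_t$ that by themselves contribute enough mass to $\sum_s w_s^2$ to recover the rate. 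Making this dichotomy quantitative, and verifying it matches the stated constants, is where the bulk of the technical work sits.
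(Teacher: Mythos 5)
Your reduction to proving $\sum_{t=1}^T w_t^2 \ge \tfrac{T w_1^2}{4\max\{1,2\sm\eta T\}}$ for $f(w)=\tfrac{\sm}{2}w^2$ is algebraically correct, and the small-stepsize regime $2\sm\eta T\le 1$ can indeed be handled (though a cleaner argument than your triangle-inequality sketch is needed; see below). However, there is a genuine gap: in the large-stepsize regime you explicitly leave the central step unresolved, namely relating $\sum_t w_{t-d_t}^2$ back to $\sum_s w_s^2$ under arbitrary delays, and your dichotomy on $m_{\max}$ is only a plan, not an argument. That is not a detail---for your fixed choice of $f$ it really is the hard part, and you have not shown it goes through with the stated constants.

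The paper avoids this obstacle entirely by observing that the theorem only asserts \emph{existence} of a $\sm$-smooth convex function, so the hard instance is allowed to depend on $\eta$. It takes $f(w)=\tfrac{\epsilon}{2}(w-w^\star)^2$ with $w^\star=\pm1$ and, crucially, $\epsilon=\min\{\sm,\,\tfrac{1}{2\eta T}\}$. Since $\epsilon\le\sm$, this function is still $\sm$-smooth, but the curvature is tuned so that the total displacement over $T$ steps is always at most half the initial gap: one shows by induction that $\tfrac12(w_1-w^\star)\le w_t-w^\star\le w_1-w^\star$ for all $t$, regardless of the delay sequence. The two regimes you split on are both absorbed into this single induction, and summing $\|\nabla f(w_t)\|^2 = \epsilon^2(w_t-w^\star)^2 \ge \tfrac{\epsilon^2}{4}(w_1-w^\star)^2$ gives the bound immediately. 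In short, the idea you are missing is that you may choose a \emph{flatter} function as $\eta$ grows; insisting on $\epsilon=\sm$ forces you into the overshooting/oscillation analysis that you correctly identify as the bottleneck, but there is no need to fight that fight.

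Two further remarks on your easy-regime sketch. First, placing the minimizer at the edge of $[-1,1]$ rather than at $0$ is not cosmetic: with $w^\star=-1$ and $w_1\ge0$, all iterates stay strictly on one side of $w^\star$, so every gradient $\epsilon(w_{t-d_t}-w^\star)$ has the same sign, the displacement sum telescopes exactly, and no triangle inequality is lost. Your bound $|w_{t+1}|\ge|w_t|-\eta\sm\max_{s\le t}|w_s|$ combined with $\max_s|w_s|\le e^{1/2}|w_1|$ only gives $|w_t|\gtrsim 0.18\,|w_1|$ when $\eta\sm T=\tfrac12$, which is not enough for the $\tfrac14 w_1^2$ per-iterate lower bound you need. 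You can fix this by running the same one-sided induction directly (if $w_1>0$, all iterates remain in $[w_1/2,w_1]$ whenever $\eta\sm T\le\tfrac12$), but once you notice that, you have essentially rediscovered the paper's induction---and at that point it is more economical to also scale $\epsilon$ and skip the second regime.
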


\begin{proof}
Without loss of generality, we will assume that $w_1 \geq 0$ and let $w^\star=-1$ (otherwise, use $w^\star=1$ and a similar argument will hold). Let $f(w)=\frac{\epsilon}{2} \norm{w-w^\star}^2$ for $\epsilon=\min\set{\sm,\ifrac{1}{(2 \eta T)}}$, which is $\sm$-smooth and convex function admitting a minimizer $w^\star$.

We will prove by induction that $\frac12 (w_1-w^\star) \leq (w_t-w^\star) \leq (w_1-w^\star)$ for all $t \in [T]$. The base case $t=1$ is immediate since $w_1-w^\star > 0$. At step $t$,
\begin{align*}
    w_{t+1} = w_t - \eta \nabla f(w_{t-d_t})
    = w_1 - \eta \sum_{s=1}^t \nabla f(w_{s-d_s})
    = w_1 - \eta\epsilon \sum_{s=1}^t (w_{s-d_s}-w^\star).
\end{align*}
Hence, by the induction assumption and the definition of $\epsilon$,
\begin{align*}
    w_{t+1} - w^\star \geq (w_1-w^\star) - \eta \epsilon t (w_1-w^\star)
    \geq \frac12 (w_1-w^\star).
\end{align*}
On the other hand, as $w_{s-d_s}-w^\star \geq \frac12 (w_1-w^\star) > 0$ by the induction,
\begin{align*}
    (w_{t+1}-w^\star) = (w_1 -w^\star)- \eta\epsilon \sum_{s=1}^t (w_{s-d_s}-w^\star)\leq (w_1-w^\star),
\end{align*}
concluding the proof by induction.
Hence,
\begin{align*}
    \frac1T \sum_{t=1}^T \norm{\nabla f(w_t)}^2
    &=
    \frac{\epsilon^2}{T} \sum_{t=1}^T \norm{w_t-w^\star}^2
    \geq
    \frac{\epsilon^2}{4 T} \sum_{t=1}^T \norm{w_1-w^\star}^2
    =
    \frac{\sm (f(w_1)-f(w^\star))}{2 \max\set{1,2 \sm \eta T}}
\end{align*}
and
\begin{align*}
    \frac1T \sum_{t=1}^T f(w_t)-f(w^\star)
    = \frac{\epsilon}{2T} \sum_{t=1}^T \norm{w_t-w^\star}^2
    \geq \frac{\epsilon}{8T} \sum_{t=1}^T \norm{w_1-w^\star}^2
    = \frac{\sm \norm{w_1-w^\star}^2}{8 \max\set{1,2 \sm \eta T}}.
\end{align*}

\end{proof}

\section{Additional Results using \texorpdfstring{\cref{alg:async-mini-batch-sweep}}{Algorithm 2}}
\label{sec:additioan-quantile-theorems}

Following are additional applications of \cref{alg:async-mini-batch-sweep} using projected SGD in the convex non-smooth and SGD in the convex smooth setting. Their proofs follow.

\begin{theorem}
\label{thm:convex-quantile}
    Let $\domain \subset \reals^d$ be a convex set with diameter bounded by $\diam$, $f : \domain \to \reals$ be a $\lip$-Lipschitz convex function and $w^\star \in \argmin_{w \in \domain} f(w)$. Let $g : \domain \to \reals^d$ be an unbiased gradient oracle of $f$ with $\sigma^2$-bounded variance, $T \in \naturals$ and $w_1 \in \domain$.
    Let $\alg(K)$ be stochastic mirror descent initialized at $w_1$ with stepsize
    \begin{align*}
        \gamma_t = \frac{\diam}{\sqrt{(\lip^2 + \sigma^2/\batch) K}},
    \end{align*}
    where
    \begin{align*}
        \batch = \max\set*{1,\ceil*{\frac{\sigma^2}{\lip^2}}}.
    \end{align*}
    Let $W=\brk{\wout_1,\ldots,\wout_I}$ be the outputs of $\alg$ by running \cref{alg:async-mini-batch-sweep} for $T \in \naturals$ asynchronous steps with parameters $\alg$,
        $K_i 
        = 2^{i-1}
        \text{ and }
        \batch_i = \batch.
        $
    Then
    \begin{align*}
        \E\brk[s]*{f(\wout)-f(w^\star)}
        &\leq
        \inf_{q \in (0,1]}
        \frac{\diam \lip \sqrt{32 (1+2\delayqq)} + \diam \sigma \sqrt{48}}{\sqrt{q T}}
        ,
    \end{align*}
    where $\wout=\wout_I$ if $W$ is not empty, and $w_1$ otherwise.
\end{theorem}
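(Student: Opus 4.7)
The plan is to mirror the proof of \cref{thm:quantile-collection} \ref{thm:quantile-collection-non-convex}, but with a constant batch size $\batch$ rather than one that grows geometrically with the epoch index. The choice $\batch = \max\{1,\lceil \sigma^2/\lip^2\rceil\}$ is the natural balance between the Lipschitz term $\lip^2$ and the per-mini-batch variance $\sigma^2/\batch$; it does not depend on any quantile, which means that translating \cref{lem:sweep-updates} into the final rate will require a somewhat more careful case split than in the smooth non-convex case.

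First I would dispatch the degenerate case where the output list $W$ is empty. Because $t_1^{(1)} = 1$ and $d_t \leq t-1$ always, the inner ``if'' test is satisfied at every round of the first epoch, so epoch one completes as soon as $\batch$ rounds have elapsed. Thus $W$ empty forces $T < \batch \leq 1 + \sigma^2/\lip^2$, and combining Lipschitzness with the diameter bound gives $f(w_1) - f(w^\star) \leq \lip\diam$; since $\sqrt{qT} \leq \sqrt{T} < 1 + \sigma/\lip$, this fits inside the desired $\diam(\lip\sqrt{1+\delayqq} + \sigma)/\sqrt{qT}$ envelope.

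For the main case where $W$ is non-empty, I would fix an arbitrary $q \in (0,1]$ and apply \cref{lem:sweep-updates} to obtain $qT < 2(\batch + \delayqq) K_{I+1} = 4(\batch + \delayqq) K_I$, and hence $1/K_I \leq 4(\batch+\delayqq)/qT$. Each mini-batch $\tilde g_k^{(I)}$ is the mean of $\batch$ i.i.d.\ unbiased gradient estimates at a common query, so its variance is at most $\sigma^2/\batch$; plugging this into \cref{thm:sgd-convex-lipschitz} for the final epoch yields
\[
\E\brk[s]*{f(\wout_I) - f(w^\star)} \leq \frac{2\diam\sqrt{\lip^2 + \sigma^2/\batch}}{\sqrt{K_I}},
\]
and the choice of $\batch$ ensures $\sigma^2/\batch \leq \lip^2$.

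The main obstacle will be combining these two estimates into a bound of the stated shape, since $\batch$ is tuned to the noise and not to the (unknown) quantile. A two-case split handles this cleanly: when $\batch \leq \max\{1,\delayqq\}$ we have $\batch+\delayqq \leq 1+2\delayqq$, which combined with $\sqrt{\lip^2+\sigma^2/\batch}\le\sqrt{2}\,\lip$ produces the $\diam\lip\sqrt{1+2\delayqq}/\sqrt{qT}$ term; when $\batch > \max\{1,\delayqq\}$ instead, $\batch+\delayqq<2\batch$ makes $1/K_I = O(\batch/qT)$, and the inequality $\batch\lip^2 \leq \lip^2 + \sigma^2$ converts the extra $\sqrt{\batch}$ factor into the desired $\diam\sigma/\sqrt{qT}$ term (plus a harmless $\diam\lip/\sqrt{qT}$ contribution that can be merged into the first term). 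Taking the infimum over $q \in (0,1]$ and tracking constants yields the stated numerical prefactors.
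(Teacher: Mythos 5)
Your proposal is correct and mirrors the paper's proof almost line for line: both dispose of the empty-$W$ case via the Lipschitz bound $f(w_1)-f(w^\star)\le\lip\diam$ and $T<\batch$, both invoke \cref{lem:sweep-updates} to get $1/K_I \le 4(\batch+\delayqq)/(qT)$, both perform the identical case split on $\batch$ versus $\max\{1,\delayqq\}$, and both feed the mini-batch variance $\sigma^2/\batch$ into \cref{thm:sgd-convex-lipschitz} before taking the infimum over $q$. The only cosmetic difference is in the second case, where the paper uses $\batch\lip^2\le 2\sigma^2$ (exploiting $\sigma>\lip$) in place of your slightly weaker $\batch\lip^2\le\lip^2+\sigma^2$, which saves you from having to merge the spillover $\diam\lip/\sqrt{qT}$ term into the first summand.
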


\begin{theorem}
\label{thm:convex-smooth-rsgd-quantile}
    Let $f : \reals^d \to \reals$ be a $\sm$-smooth convex function admitting a minimizer $w^\star$, $g : \reals^d \to \reals^d$ be an unbiased gradient oracle of $f$ with $\sigma^2$-bounded variance, $T \in \naturals$ and $w_1 \in \reals^d$.
    Let $\alg(K)$ be SGD initialized at $w_1$ with stepsize $\gamma_k=\frac{1}{\sm}$.
    Let $W=\brk{\wout_1,\ldots,\wout_I}$ be the outputs of $\alg$ by running \cref{alg:async-mini-batch-sweep} for $T \in \naturals$ asynchronous steps with parameters $\alg$,
    \begin{align*}
        K_i &= 2^{i-1}
        \qquad\text{and}\qquad
        \batch_i = \max\set*{1,\ceil*{\frac{\sigma^2 K_i}{\sm^2 \diam^2}}}.
    \end{align*}
    Then
    \begin{align*}
        \E\brk[s]*{f(\wout)-f(w^\star)}
        &\leq
        \inf_{q \in (0,1]}
        \frac{12 (1+2\delayqq) \sm \diam^2}{q T}
        + \frac{\sigma \diam \sqrt{288}}{\sqrt{q T}}
        ,
    \end{align*}
    where $\wout=\wout_I$ if $W$ is not empty, and $w_1$ otherwise.
\end{theorem}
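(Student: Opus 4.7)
The plan is to closely follow the proof of item~\ref{thm:quantile-collection-non-convex} of \cref{thm:quantile-collection} (the non-convex SGD case), substituting the convex smooth SGD rate (\cref{thm:sgd-smooth-convex}) for the non-convex one (\cref{thm:sgd-non-convex}) and using the present batch-size schedule $\batch_i = \max\set{1, \ceil{\sigma^2 K_i/(\sm^2 \diam^2)}}$. The doubling structure $K_i = 2^{i-1}$, the variance reduction to $\sigma^2/\batch_I$ via mini-batching, and the sweep-lemma-based case analysis all carry over essentially unchanged.

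First I would dispense with the trivial case where the output set $W$ is empty, so that $\wout = w_1$. Emptiness implies $T < \batch_1$, which together with $T \geq 1$ forces $\batch_1 > 1$ and hence $T \sm^2 \diam^2 < 2\sigma^2$. Combined with the smoothness bound $f(w_1) - f(w^\star) \leq \tfrac{\sm}{2}\diam^2$, this yields $f(w_1) - f(w^\star) < \sigma \diam/\sqrt{2T}$, which is dominated by the target stochastic term $\sigma \diam \sqrt{288}/\sqrt{qT}$ for every $q \in (0,1]$.

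For the main case where $W$ is non-empty, I would observe that with effective variance $\sigma^2/\batch_I$ (by linearity of variance, since $\tilde g_k^{(I)}$ averages $\batch_I$ independent unbiased stochastic gradients of $\nabla f(\wt_k^{(I)})$), the stepsize prescribed by \cref{thm:sgd-smooth-convex} is $\min\set{\ifrac{1}{\sm}, \sqrt{\ifrac{\diam^2 \batch_I}{\sigma^2 K_I}}}$, and the choice $\batch_I \geq \sigma^2 K_I/(\sm^2 \diam^2)$ makes this minimum equal to the fixed stepsize $1/\sm$ used by $\alg$. \cref{thm:sgd-smooth-convex} then gives
\begin{align*}
\E\brk[s]{f(\wout_I) - f(w^\star)} \leq \frac{\sm \diam^2}{K_I} + \frac{2 \sigma \diam}{\sqrt{\batch_I K_I}}.
\end{align*}
A short case split on whether $\batch_I = 1$ (where $\sigma\sqrt{K_I} \leq \sm \diam$) or $\batch_I > 1$ (where $\batch_I K_I \geq \sigma^2 K_I^2/(\sm^2 \diam^2)$) shows that the stochastic term is at most $2\sm \diam^2/K_I$ in both cases, yielding the key bound $\E\brk[s]{f(\wout_I) - f(w^\star)} \leq 3\sm\diam^2/K_I$.

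The remaining step is to upper bound $1/K_I$ via \cref{lem:sweep-updates}, which gives $qT < 2(\batch_{I+1} + \delayqq) K_{I+1}$, and to split into two cases exactly as in the non-convex proof. If $\batch_{I+1} \leq \max\set{1, \delayqq}$, then $\batch_{I+1} + \delayqq \leq 1 + 2\delayqq$, whence $1/K_I \leq 4(1+2\delayqq)/(qT)$. Otherwise $\batch_{I+1} > 1$ permits $\batch_{I+1} \leq 2\sigma^2 K_{I+1}/(\sm^2 \diam^2)$, and together with $\batch_{I+1} + \delayqq < 2\batch_{I+1}$ this produces $qT < 32 \sigma^2 K_I^2/(\sm^2 \diam^2)$, so $1/K_I \leq \sigma \sqrt{32}/(\sm\diam\sqrt{qT})$. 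Substituting both bounds into $3\sm\diam^2/K_I$ and summing yields $12(1+2\delayqq)\sm\diam^2/(qT) + 3\sigma\diam\sqrt{32}/\sqrt{qT}$; since $3\sqrt{32} = \sqrt{288}$ this matches the claimed constant, and taking the infimum over $q \in (0,1]$ finishes the proof.

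I expect this to be essentially routine; the main ``obstacle'' is purely bookkeeping, namely verifying that the schedule $\batch_i = \max\set{1, \ceil{\sigma^2 K_i/(\sm^2 \diam^2)}}$ is calibrated so that the fixed stepsize $1/\sm$ is exactly the optimal SGD stepsize at every epoch, and that the stochastic SGD term gets absorbed into the deterministic one. Once this calibration is verified, the rest of the argument mirrors the non-convex case up to constants.
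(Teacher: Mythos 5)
Your proposal is correct and follows essentially the same approach as the paper's proof: the same empty-$W$ base case via smoothness, the same invocation of \cref{lem:sweep-updates} with the same case split on $\batch_{I+1}$ versus $\max\set{1,\delayqq}$, and the same application of \cref{thm:sgd-smooth-convex} with the variance reduction $\sigma^2/\batch_I$ absorbing the stochastic term into $3\sm\diam^2/K_I$. The only differences are cosmetic constant slack in the empty-$W$ case (the paper uses $T < \sigma^2/(\sm^2\diam^2)$ by integrality of $T$, while you settle for a factor of $2$, which is still dominated by the target bound).
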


\subsection{Proof of \texorpdfstring{\cref{thm:convex-quantile}}{Theorem 5}}

\begin{proof}[\nopunct\unskip]%
    If $W$ is empty, then $T < \batch$ (as $T_1=1$ and the condition $1= t_1^{(1)} \leq t-d_t$ is always satisfied). Hence, as $T \geq 1$,
    \begin{align*}
        T < \frac{\sigma^2}{\lip^2},
    \end{align*}
    and using the Lipschitz and diameter assumptions,
    \begin{align*}
        f(\wout)-f(w^\star)
        &= f(w_1)-f(w^\star)
        \leq \diam \lip
        < \frac{\diam \sigma}{\sqrt{T}}
        \leq \frac{\diam \sigma}{\sqrt{q T}}
    \end{align*}
    for all $q \in (0,1]$. We proceed to the case where $W$ is not empty.
    Let $q \in (0,1]$. By \cref{lem:sweep-updates},
    \begin{align*}
        q T < 2 (\batch + \delayqq) K_{I+1}.
    \end{align*}
    If $\batch \leq \max\set{1,\delayqq}$,
    \begin{align*}
        q T \leq 2(1+2\delayqq) K_{I+1}
        \implies
        \frac{\diam^2 \brk{\lip^2 + \sigma^2 / \batch}}{K_I} \leq \frac{8 (1+2\delayqq) \diam^2 \lip^2}{q T}
        .
    \end{align*}
    If $\batch > \max\set{1,\delayqq}$, $\sigma > \lip$, $\batch = \ceil{\sigma^2 / \lip^2}$ and
    \begin{align*}
        q T \leq 4 \batch K_{I+1}
        \implies
        \frac{\diam^2 \brk{\lip^2 + \sigma^2 / \batch}}{K_I}
        \leq
        \frac{8 \batch \diam^2 \brk{\lip^2 + \sigma^2 / \batch}}{q T}
        \leq \frac{24 \diam^2 \sigma^2}{q T}
        ,
    \end{align*}
    where the last inequality follows by $\batch \lip^2 \leq 2 \sigma^2$.
    We are left with applying \cref{thm:sgd-non-convex}. To that end note that $\tilde g_k^{(I)}$ is the average of $\batch$ i.i.d. unbiased estimations of $\nabla f(\wt_k^{(I)})$, and by the linearity of the variance has variance bounded by $\sigma^2 / \batch$. Thus, our selection of parameters enable us to use \cref{thm:sgd-convex-lipschitz} and establish that
    \begin{align*}
        \E\brk[s]*{f(\wout_I)-f(w^\star)}
        &\leq
        \frac{2 \diam \sqrt{\lip^2 + \sigma^2/\batch}}{\sqrt{K_I}}
        \leq
        \frac{\diam \lip \sqrt{32 (1+2\delayqq)} + \diam \sigma \sqrt{96}}{\sqrt{q T}}
        .
    \end{align*}
    As this holds for any $q \in (0,1]$,
    \begin{align*}
        \E\brk[s]*{f(\wout_I)-f(w^\star)}
        &\leq
        \inf_{q \in (0,1]}
        \frac{\diam \lip \sqrt{32 (1+2\delayqq)} + \diam \sigma \sqrt{96}}{\sqrt{q T}}
        .
        \qedhere
    \end{align*}
\end{proof}

\subsection{Proof of \texorpdfstring{\cref{thm:convex-smooth-rsgd-quantile}}{Theorem 6}}

\begin{proof}[\nopunct\unskip]%
    If $W$ is empty, then $T < \batch_1$ (as $T_1=1$ and the condition $1=t_1^{(1)} \leq t-d_t$ is always satisfied). Hence, as $T \geq 1$,
    \begin{align*}
        T < \frac{\sigma^2}{ \sm^2 \diam^2},
    \end{align*}
    and from smoothness,
    \begin{align*}
        f(\wout)-f(w^\star) = f(w_1)-f(w^\star) \leq \frac{\sm \diam^2}{2} < \frac{\sigma \diam}{2 \sqrt{T}} \leq \frac{\sigma \diam}{2 \sqrt{q T}}
    \end{align*}
    for all $q \in (0,1]$. We proceed to the case where $W$ is not empty.
    Let $q \in (0,1]$. By \cref{lem:sweep-updates},
    \begin{align*}
        q T < 2 (\batch_{I+1} + \delayqq) K_{I+1}.
    \end{align*}
    If $\batch_{I+1} \leq \max\set{1,\delayqq}$,
    \begin{align*}
        q T \leq 2(1+2 \delayqq) K_{I+1}
        \implies
        \frac{1}{K_I} < \frac{4 (1+2\delayqq)}{q T}
        .
    \end{align*}
    If $\batch_{I+1} > \max\set{1,\delayqq}$,
    \begin{align*}
        q T \leq 4 B_{I+1} K_{I+1} \leq \frac{8 \sigma^2 K_{I+1}^2}{\sm^2 \diam^2}
        \implies
        \frac{1}{K_I} \leq \frac{\sigma \sqrt{32}}{\sm \diam \sqrt{q T}}
        .
    \end{align*}
    We are left with applying \cref{thm:sgd-smooth-convex}. To that end note that $\tilde g_k^{(I)}$ is the average of $\batch_I$ i.i.d. unbiased estimations of $\nabla f(\wt_k^{(I)})$, and by the linearity of the variance has variance bounded by $\sigma^2 / \batch_I$. Thus, our selection of parameters enable us to use \cref{thm:asgd-smooth-convex} and establish that
    \begin{align*}
        \E\brk[s]*{f(\wout_I)-f(w^\star)}
        &\leq
        \frac{\sm \diam^2}{K_I} + \frac{2 \sigma \diam}{\sqrt{\batch_I K_I}}
        \leq \frac{3 \sm \diam^2}{K_I}
        \leq \frac{12 (1+2\delayqq) \sm \diam^2}{q T}
        + \frac{\sigma \diam \sqrt{288}}{\sqrt{q T}}
        .
    \end{align*}
    As this holds for any $q \in (0,1]$,
    \begin{align*}
        \E\brk[s]*{f(\wout_I)-f(w^\star)}
        &\leq
        \inf_{q \in (0,1]}
        \frac{12 (1+2\delayqq) \sm \diam^2}{q T}
        + \frac{\sigma \diam \sqrt{288}}{\sqrt{q T}}
        .
        \qedhere
    \end{align*}
\end{proof}

\section{Proofs of \texorpdfstring{\cref{sec:quantile}}{Section 4}}
\label{sec:quantile-proofs}

\subsection{Proof of \texorpdfstring{\cref{lem:sweep-updates}}{Lemma 2}}

\begin{proof}[\unskip\nopunct]%
    Let $i \in [I+1]$ and $k \in [K_i]$.
    Let $n\brk{i,k}$ be the number of rounds with delay less or equal $\delayqq$ at times $t_k^{(i)} \leq t < t_{k+1}^{(i)}$ for $k < K_i$ and $t_{K_i}^{(i)} \leq t < t_1^{(i+1)}$ for $k=K_i$. Assume by contradiction that $n\brk{i,k} > \batch_i+\delayqq$, and let $S = \brk{a_1,\ldots,a_{n_{i,k}}}$ be the rounds of the $(i,k)$ iteration with $d_{a_i} \leq \delayqq$ ordered in an increasing order. For any $i > \delayqq$, as the sequence is non-decreasing and $a_1 \geq t_k^{(i)}$,
    \begin{align*}
        a_i - t_k^{(i)} \geq a_i-a_1 \geq \delayqq \geq d_{a_i},
    \end{align*}
    which means that the inner ``if'' condition is satisfied and contradicts the condition of the while loop, $b \leq \batch_i$, since there is at least $\batch_i+1$ rounds with $i > \delayqq$. Thus, $n\brk{i,k} \leq \batch_i+\delayqq$ for all $i \in [I+1]$ and $k \in [K]$. As each $\batch_i+\delayqq$ rounds with delay less or equal $\delayqq$ during the $i$'th iteration must account for a response to $\alg$, and as we have at least $\ceil{q T}$ such rounds in total,
    \begin{align*}
        q T < \sum_{i=1}^{I+1} K_i(\batch_i + \delayqq),
    \end{align*}
    otherwise $\wout_I$ will not be the last produced $\wout_i$. Hence, as $(\batch_i)_i$ is non-decreasing and $(K_i)_i$ is a geometric series,
    \begin{align*}
        q T &< (\batch_{I+1} + \delayqq) \sum_{i=1}^{I+1} K_i \leq 2 (\batch_{I+1} + \delayqq) K_{I+1}.
        \qedhere
    \end{align*}
\end{proof}

\subsection{Proof of \texorpdfstring{\cref{thm:quantile-collection}}{Theorem 3}}

For completeness, we restate each part of \cref{thm:quantile-collection} as a full theorem and provide the proofs.

\begin{theorem}[\cref{thm:quantile-collection} \ref{thm:quantile-collection-non-convex}]\label{thm:non-convex-quantile-new}
    Let $f : \reals^d \to \reals$ be a $\sm$-smooth function lower bounded by $f^\star$, $g : \reals^d \to \reals^d$ an unbiased gradient oracle of $f$ with $\sigma^2$-bounded variance, $T \in \naturals$ and $w_1 \in \reals^d$.
    Let $\alg(K)$ be SGD initialized at $w_1$ with stepsize $\frac{1}{\sm}$. Let $W=\brk{\wout_1,\ldots,\wout_I}$ be the outputs of $\alg$ by running  \cref{alg:async-mini-batch-sweep} for $T \in \naturals$ asynchronous rounds with parameters $\alg$,
    \begin{align*}
        K_i = 2^{i-1}
        \qquad\text{and}\qquad
        \batch_i = \max\set*{1,\ceil*{\frac{\sigma^2 K_i}{2 \sm \fbound}}},
    \end{align*}
    for some $\fbound \geq f(w_1)-f^\star$.
    Then
    \begin{align*}
        \E\brk[s]*{\norm{\nabla f(\wout)}^2}
        &\leq
        \inf_{q \in (0,1]}
        \frac{24 (1+2\delayqq) \sm \fbound}{q T} + \frac{24 \sigma \sqrt{\sm \fbound}}{\sqrt{q T}}
        ,
    \end{align*}
    where $\wout=\wout_I$ is $W$ is not empty, and $w_1$ otherwise.
\end{theorem}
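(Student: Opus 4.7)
The plan is to follow the standard template for analyzing doubling-trick/any-time optimizers, using Lemma~\ref{lem:sweep-updates} as the bridge between the asynchronous round count $T$ and the effective number $K_I$ of inner SGD queries in the last completed epoch, and then invoking the black-box SGD rate from Lemma~\ref{thm:sgd-non-convex} on epoch $I$ with the correctly reduced variance $\sigma^2/\batch_I$ coming from the mini-batch averaging.

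First I would dispose of the degenerate case where no epoch completes, i.e.\ $W=\emptyset$ and $\wout=w_1$. Because $K_1=1$ and the filter condition $t_1^{(1)}\le t-d_t$ is trivially satisfied, failure to produce $\wout_1$ forces $T<\batch_1$, hence $T<\sigma^2/(2\sm\fbound)$. Combined with the standard smooth-function inequality $\norm{\nabla f(w)}^2\le 2\sm(f(w)-f^\star)$ (proved by evaluating smoothness at $w^+=w-\tfrac{1}{\sm}\nabla f(w)$), this yields $\norm{\nabla f(w_1)}^2\le 2\sm\fbound<\sigma^2/T\le \sigma^2/(qT)$ for every $q\in(0,1]$, which is weaker than the claimed bound.

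Next, for the main case where $W\ne\emptyset$, fix an arbitrary $q\in(0,1]$. Lemma~\ref{lem:sweep-updates} gives $qT<2(\batch_{I+1}+\delayqq)K_{I+1}$. I would split on whether the batch size is ``delay-limited'' or ``variance-limited.'' If $\batch_{I+1}\le\max\{1,\delayqq\}$ then $qT\le 2(1+2\delayqq)K_{I+1}$, which, using $K_{I+1}=2K_I$, gives $1/K_I\le 4(1+2\delayqq)/(qT)$. Otherwise $\batch_{I+1}>\max\{1,\delayqq\}$, so $qT\le 4\batch_{I+1}K_{I+1}$; plugging in the definition $\batch_{I+1}\le \sigma^2 K_{I+1}/(\sm\fbound)+1\le 2\sigma^2 K_{I+1}/(\sm\fbound)$ (since we are in the regime where the ceiling dominates) and rearranging yields $1/K_I\lesssim \sigma/\sqrt{\sm\fbound\,qT}$.

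Finally I apply Lemma~\ref{thm:sgd-non-convex} to epoch $I$: $\tilde g_k^{(I)}$ is an average of $\batch_I$ i.i.d.\ oracle calls at the same query point (this is exactly what the filter $t_k^{(I)}\le t-d_t$ guarantees), so it is an unbiased gradient estimator with variance at most $\sigma^2/\batch_I$. The SGD bound becomes $\E\norm{\nabla f(\wout_I)}^2\le 2\sm\fbound/K_I+\sqrt{8\sigma^2\sm\fbound/(\batch_I K_I)}$, and the choice $\batch_I\ge \sigma^2 K_I/(2\sm\fbound)$ is precisely what collapses the stochastic term into the deterministic one, giving the clean bound $\le 6\sm\fbound/K_I$. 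Combining with the two cases for $1/K_I$ from the previous paragraph produces the desired $O((1+\delayqq)\sm\fbound/(qT)+\sigma\sqrt{\sm\fbound}/\sqrt{qT})$; taking the infimum over $q$ (valid since $q$ was arbitrary and $\wout$ does not depend on $q$) finishes the proof.

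The only delicate step is the choice of $\batch_i$: it must be large enough that SGD's variance term never exceeds its deterministic term at the epoch scale $K_i$, yet small enough that Lemma~\ref{lem:sweep-updates} still yields a nontrivial $1/K_I$ bound in the delay-limited regime. The geometric schedule $K_i=2^{i-1}$ makes $\sum_{i\le I+1}K_i\le 2K_{I+1}$, which is what lets us pay only a constant factor for not knowing the best quantile (or best epoch) in advance.
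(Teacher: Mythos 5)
Your proposal is correct and follows essentially the same route as the paper's own proof: disposing of the empty-$W$ case via $\norm{\nabla f(w_1)}^2 \le 2\sm\fbound < \sigma^2/(qT)$, then using \cref{lem:sweep-updates} to bound $1/K_I$ by splitting on whether $\batch_{I+1}$ is delay-limited or variance-limited, collapsing the stochastic term of \cref{thm:sgd-non-convex} into the deterministic one via the choice $\batch_I \ge \sigma^2 K_I/(2\sm\fbound)$, and finally taking the infimum over $q$. The only minor slip is in the variance-limited case where you wrote $\batch_{I+1}\le\sigma^2 K_{I+1}/(\sm\fbound)+1$ instead of $\sigma^2 K_{I+1}/(2\sm\fbound)+1$ (the definition has a factor $2\sm\fbound$ in the denominator), but since $\batch_{I+1}>1$ implies $\sigma^2 K_{I+1}/(2\sm\fbound)>1$, the correct chain gives $\batch_{I+1}\le\sigma^2 K_{I+1}/(\sm\fbound)$, which is what the paper uses and what you implicitly need; this is a constant-factor nit that does not affect the argument.
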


The proof of \cref{thm:quantile-collection} \ref{thm:quantile-collection-non-convex} is already provided in \cref{sec:quantile}.

\begin{theorem}[\cref{thm:quantile-collection} \ref{thm:quantile-collection-convex-smooth-accelerated}]\label{thm:acceleration-quantile}
    Let $f : \reals^d \to \reals$ be a $\sm$-smooth convex function admitting a minimizer $w^\star$, $g : \reals^d \to \reals^d$ be an unbiased gradient oracle of $f$ with $\sigma^2$-bounded variance, $T \in \naturals$ and $w_1 \in \reals^d$.
    Let $\alg(K)$ be accelerated SGD initialized at $w_1$ with parameters $\alpha_t = \frac{2}{t+1}$ and $\gamma_t=\gamma t$ where $\gamma = \frac{1}{4\sm}$ and $\diam \geq \norm{w_1-w^\star}$.
    Let $W=\brk{\wout_1,\ldots,\wout_I}$ be the outputs of $\alg$ by running \cref{alg:async-mini-batch-sweep} for $T \in \naturals$ asynchronous rounds with parameters $\alg$,
    \begin{align*}
        K_i &= 2^{i-1}
        \qquad\text{and}\qquad
        \batch_i = \max\set*{1,\ceil*{\frac{\sigma^2 K_i (K_i+1)^2}{12 \sm^2 \diam^2}}}.
    \end{align*}
    Then
    \begin{align*}
        \E\brk[s]*{f(\wout)-f(w^\star)}
        &\leq
        \inf_{q \in (0,1]}
        \frac{192 (1+2\delayqq)^2 \sm \diam^2}{q^2 T^2} + \frac{48 \sigma \diam}{\sqrt{q T}}
        ,
    \end{align*}
    where $\wout=\wout_I$ if $W$ is not empty, and $w_1$ otherwise.
\end{theorem}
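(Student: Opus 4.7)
The plan is to follow the same template used in the proof of item (i) of \cref{thm:quantile-collection}, substituting the accelerated SGD rate \cref{thm:asgd-smooth-convex} in place of the vanilla SGD rate. First I would dispatch the degenerate case where $W$ is empty: the algorithm must then satisfy $T < \batch_1 = \max\{1, \lceil \sigma^2/(3\sm^2 \diam^2)\rceil\}$, so either $T=0$ is ruled out or $\sigma^2 > 3\sm^2 \diam^2 T$. Using the standard smoothness bound $f(w_1)-f(w^\star) \leq \tfrac{\sm}{2}\diam^2$ and comparing with $\sigma \diam/\sqrt{qT}$ shows the claimed bound holds for every $q \in (0,1]$.

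For non-empty $W$, \cref{lem:sweep-updates} gives $qT < 2(\batch_{I+1} + \delayqq) K_{I+1}$ for each $q \in (0,1]$, and I would split on whether $\batch_{I+1} \leq \max\{1,\delayqq\}$ or not. In the first case, $qT \leq 2(1+2\delayqq) K_{I+1}$ yields $1/K_I \leq 4(1+2\delayqq)/(qT)$, and hence $1/(K_I(K_I+1)) \leq 16(1+2\delayqq)^2/(qT)^2$ (after bounding $K_I+1 \leq 2K_I$). In the second case, $\batch_{I+1} > 1$ lets me apply $\lceil x \rceil \leq 2x$ to obtain $\batch_{I+1} \leq \sigma^2 K_{I+1}(K_{I+1}+1)^2/(6\sm^2 \diam^2)$, while $\batch_{I+1} > \delayqq$ gives $\batch_{I+1}+\delayqq < 2\batch_{I+1}$. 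Combining, $qT < 4\batch_{I+1} K_{I+1} \leq \tfrac{2\sigma^2}{3\sm^2 \diam^2} K_{I+1}^2(K_{I+1}+1)^2$, which rearranges to a bound of the form $1/(K_{I+1}(K_{I+1}+1))^2 \leq O(\sigma^2/(\sm^2 \diam^2 qT))$, and then (losing a constant factor from $K_{I+1}=2K_I$) to the same type of bound on $1/(K_I(K_I+1))^2$.

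With these inequalities in hand, I would invoke \cref{thm:asgd-smooth-convex} on the $I$-th epoch, noting that $\tilde g_k^{(I)}$ has variance at most $\sigma^2/\batch_I$ by linearity of variance, giving
\begin{align*}
    \E\brk[s]*{f(\wout_I)-f(w^\star)}
    \leq
    \frac{4\sm \diam^2}{K_I(K_I+1)} + \frac{4\sigma \diam}{\sqrt{3 \batch_I K_I}}.
\end{align*}
The choice $\batch_I = \max\{1, \lceil \sigma^2 K_I(K_I+1)^2/(12\sm^2 \diam^2)\rceil\}$ is engineered precisely so that $\sigma^2/(\batch_I K_I) \leq 12\sm^2 \diam^2/K_I^2(K_I+1)^2$ whenever $\batch_I > 1$, and so the stochastic term is dominated by a constant multiple of the deterministic term, collapsing the bound to $O(\sm \diam^2/(K_I(K_I+1)))$. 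When $\batch_I=1$ the stochastic term is absorbed trivially into the $\sigma \diam/\sqrt{qT}$ term via $1/\sqrt{K_I}\le 1$ and the case~1 inequality. Substituting the two case-wise bounds produces the $(1+2\delayqq)^2 \sm \diam^2/(qT)^2$ contribution from case~1 and the $\sigma \diam/\sqrt{qT}$ contribution from case~2, and taking the infimum over $q \in (0,1]$ yields the final statement.

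The main obstacle is the case~2 accounting: one has to carefully relate the quartic-in-$K$ inequality coming from the batch constraint $\batch_{I+1} K_{I+1} \lesssim K_{I+1}^4$ to the deterministic $\sm \diam^2/K_I^2$ rate of accelerated SGD, and verify that this precisely collapses to $\sigma \diam/\sqrt{qT}$ with the stated constants after the loss of factors from the doubling schedule $K_{I+1}=2K_I$ and from writing $K_I(K_I+1)$ in terms of $K_I^2$. Apart from constant-chasing, the argument is a direct analog of the proof of item~(i), with acceleration's better deterministic rate driving the improvement from $1/(qT)$ to $1/(qT)^2$.
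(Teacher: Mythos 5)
Your proposal mirrors the paper's proof step for step: dispatch the empty-$W$ case by smoothness, invoke \cref{lem:sweep-updates}, split on whether $\batch_{I+1}$ exceeds $\max\{1,\delayqq\}$ to bound $1/(K_I(K_I+1))$, then plug into \cref{thm:asgd-smooth-convex} using the variance reduction $\sigma^2/\batch_I$. One small inaccuracy: your separate handling of the $\batch_I=1$ subcase is unnecessary (and as stated does not quite work, since $1/\sqrt{K_I}\le 1$ alone does not give a $\sigma\diam/\sqrt{qT}$ bound); the inequality $\sigma^2/(\batch_I K_I) \le 12\sm^2\diam^2/(K_I(K_I+1))^2$ holds unconditionally because $\batch_I = \max\{1,\lceil x\rceil\} \ge x$ for $x=\sigma^2 K_I(K_I+1)^2/(12\sm^2\diam^2)$, which is exactly how the paper absorbs the stochastic term without any case split on $\batch_I$.
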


\begin{proof}[Proof of \cref{thm:acceleration-quantile}]
    If $W$ is empty, then $T < \batch_1$ (as $T_1=1$ and the condition $1=t_1^{(1)} \leq t-d_t$ is always satisfied). Hence, as $T \geq 1$,
    \begin{align*}
        T < \frac{\sigma^2}{3 \sm^2 \diam^2} < \frac{\sigma^2}{ \sm^2 \diam^2},
    \end{align*}
    and from smoothness,
    \begin{align*}
        f(\wout)-f(w^\star) = f(w_1)-f(w^\star) \leq \frac{\sm \diam^2}{2} < \frac{\sigma \diam}{2 \sqrt{T}} \leq \frac{\sigma \diam}{2 \sqrt{q T}}
    \end{align*}
    for all $q \in (0,1]$. We proceed to the case where $W$ is not empty.
    Let $q \in (0,1]$. By \cref{lem:sweep-updates},
    \begin{align*}
        q T < 2 (\batch_{I+1} + \delayqq) K_{I+1}.
    \end{align*}
    If $\batch_{I+1} \leq \max\set{1,\delayqq}$,
    \begin{align*}
        q T \leq 2(1+2 \delayqq) K_{I+1}
        \implies
        \frac{1}{K_I(K_I+1)} < \frac{1}{K_I^2} \leq \frac{16(1+2\delayqq)^2}{q^2 T^2}
        .
    \end{align*}
    If $\batch_{I+1} > \max\set{1,\delayqq}$,
    \begin{align*}
        q T \leq 4 B_{I+1} K_{I+1} \leq \frac{2 \sigma^2 K_{I+1}^2 (K_{I+1}+1)^2}{3 \sm^2 \diam^2}
        \implies
        \frac{1}{K_I (K_I+1)} \leq \frac{\sigma \sqrt{32/3}}{\sm \diam \sqrt{q T}} \leq \frac{4 \sigma}{\sm \diam \sqrt{q T}}
        .
    \end{align*}
    We are left with applying \cref{thm:asgd-smooth-convex}. To that end note that $\tilde g_k^{(I)}$ is the average of $\batch_I$ i.i.d. unbiased estimations of $\nabla f(\wt_k^{(I)})$, and by the linearity of the variance has variance bounded by $\sigma^2 / \batch_I$. Thus, our selection of parameters enable us to use \cref{thm:asgd-smooth-convex} and establish that
    \begin{align*}
        \E\brk[s]*{f(\wout_I)-f(w^\star)}
        &\leq
        \frac{4 \sm \diam^2}{K_I(K_I+1)} + \frac{4 \sigma \diam}{\sqrt{3 \batch_I K_I}}
        \leq \frac{12 \sm \diam^2}{K_I(K_I+1)}
        \leq \frac{192 (1+2\delayqq)^2 \sm \diam^2}{q^2 T^2} + \frac{48 \sigma \diam}{\sqrt{q T}}
        .
    \end{align*}
    As this holds for any $q \in (0,1]$,
    \begin{align*}
        \E\brk[s]*{f(\wout_I)-f(w^\star)}
        &\leq
        \inf_{q \in (0,1]}
        \frac{192 (1+2\delayqq)^2 \sm \diam^2}{q^2 T^2} + \frac{48 \sigma \diam}{\sqrt{q T}}
        .
        \qedhere
    \end{align*}
\end{proof}

\end{document}